\documentclass{article}
\usepackage[a4paper]{geometry}
\usepackage[utf8]{inputenc} 
\usepackage[english]{babel}
\usepackage{epsfig}
\usepackage{amsmath, amsfonts, amstext, amscd, bezier,dsfont,bm}
\usepackage{amssymb,amsthm}
\usepackage{indentfirst}
\usepackage{graphicx}
\usepackage{float}
\usepackage{subcaption}
\usepackage{color}
\usepackage{hyperref}
\usepackage[affil-it]{authblk}
\usepackage[ruled,vlined]{algorithm2e}
\usepackage{booktabs}
\usepackage{multirow}
\usepackage{xfrac}

\usepackage{xcolor}

\definecolor{DarkGreen}{RGB}{0,90,0}

\usepackage{mathtools}
\mathtoolsset{showonlyrefs}

\usepackage[authoryear]{natbib}

\newcommand{\A}{\mathbf{A}_{n\times n}}
\renewcommand{\a}{\mathbf{a}_{n\times n}}
\newcommand{\x}{\mathbf{x}_{n\times n}}

\newcommand{\Z}{\mathbf{Z}_n}
\newcommand{\z}{\mathbf{z}_n}
\newcommand{\e}{\mathbf{e}_n}
\newcommand{\hz}{\hat{\mathbf{z}}_n}
\renewcommand{\P }{\mathbb{P}_{P}}
\newcommand{\Pp }{\mathbb{P}_{\pi}}
\newcommand{\Pt }{\mathbb{P}_{\pi,P}}
\newcommand{\Pto }{\mathbb{P}_{\pi_0,P_0}}
\newcommand{\Po}{\mathbb{P}_{P_0}}
\newcommand{\pen}{\text{pen}}

\newcommand{\ind}{\mathds 1}
\newcommand{\Q}[1]{\mathrm{Q}_k(#1)}
\newcommand{\hk}{\hat{k}_{\text{\tiny DNML}}}
\newcommand{\hkNML}{\hat{k}_{\text{\tiny NML}}}
\newcommand{\hkNMCL}{\hat{k}_{\text{\tiny NMCL}}}
\newcommand{\pNML}{\text{pen}_{\text{\tiny NML}}}
\newcommand{\pDNML}{\text{pen}_{\text{\tiny DNML}}}
\newcommand{\NML}{\text{NML}}
\newcommand{\DNML}{\text{DNML}}
\newcommand{\CNML}{\mathcal C_{\text{\tiny NML}}}
\newcommand{\CDMNL}{\mathcal C_{\text{\tiny DNML}}^A}
\newcommand{\CDMNLz}{\mathcal C_{\text{\tiny DNML}}^z}
\newcommand{\NMCL}{\text{NMCL}}
\newcommand{\CNMCL}{\mathcal C_{\text{\tiny NMCL}}}

\theoremstyle{plain}
\newtheorem{theorem}{Theorem}
\newtheorem{lemma}[theorem]{Lemma}

\newtheorem{proposition}[theorem]{Proposition}

\theoremstyle{definition}

\theoremstyle{remark}

\numberwithin{equation}{section}
\numberwithin{theorem}{section}

\title{Normalized Likelihood Criteria for Model Selection in the Stochastic Block Model}

\author[1]{Andressa Cerqueira}
\author[1]{Felipe Baptistão}

\affil[1]{Departament of Statistics, Federal University of São Carlos}

\date{\today}

\begin{document}

\maketitle

\begin{abstract}
Estimating the number of communities is a fundamental problem in network analysis under the stochastic block model (SBM). In this paper, we study penalized estimators for this task based on normalized likelihood criteria. We show that a penalized estimator derived from the Normalized Maximum Likelihood (NML) is strongly consistent with a logarithmic penalty term, although its computation is intractable. To overcome this limitation, we consider the Normalized Maximum Complete Likelihood (NMCL) and the Decomposed Normalized Maximum Likelihood (DNML). The DNML admits an explicit formulation with cubic computational complexity in the number of nodes. We prove that the NMCL and DNML--based estimators are strongly consistent for sparse networks in which the average node degree diverges with the network size. Empirical results show that the DNML estimator performs competitively with existing methods, particularly in unbalanced networks.
\end{abstract}

\section{Introduction}

Network analysis has become a central area of research in understanding complex systems, with community detection being one of the most extensively studied problems. The Stochastic Block Model (SBM) proposed by \cite{holland1983stochastic} is a generative model for networks with community structure, in which nodes are partitioned into latent groups and edge probabilities depend on group memberships. This model has served as a foundation for both methodological developments and theoretical investigations. For theoretical results on the fundamental limits of community detection in the SBM, see \cite{Abbe2018} and references therein. Many community detection methods assume that the number of communities is known in advance, which makes the inferential problem of estimating this quantity a problem of independent interest.

Several methods based on different principles have been proposed in the literature to identify the number of communities in stochastic block models. Among them, network cross-validation methods have been proposed to select the number of communities. In \cite{Chen2028_NCV} the authors treat the rows of the adjacency matrix as data points to be considered in the cross validation algorithm while in \cite{Tianxi2020_ECV} the edges are considered as data points. In \cite{chakrabarty2025network}, the authors proposed a network cross-validation method based on overlapping partitions obtained through subsampling a set of overlapping nodes.

Methods based on sequential statistical testing have also been proposed in the literature to identify the number of communities. In \cite{Zhao2011_extraction}, the authors introduced a method that extracts one community at a time from the network, along with a sequential hypothesis testing procedure in which the null hypothesis states that the remaining network contains only one community. In \cite{Bickel2015_test}, the authors proposed a recursive bipartitioning algorithm based on a hypothesis test, where the null hypothesis is that the network has only one community, and they derived the corresponding null distribution. \cite{Lei2016_test} extended this approach by proposing a hypothesis test to test whether a network contains a specified number of communities $K$. In this work, the authors derived the null distribution under an SBM with more than one community and estimated the number of communities using the sequential goodness-of-fit test. Recently, \cite{oliveira2025counting} introduced a sequential statistical test for estimating the number of communities in weighted networks, in which the test statistic is derived from semidefinite programming. To estimate the number of communities using spectral methods, \cite{Can2022_spectral} proposed analyzing the spectral properties of the Bethe Hessian and non-backtracking matrices.

Another approach to model selection relies on likelihood-based methods through the derivation of BIC-type criteria, traditionally used in statistical inference. In particular, \cite{wang2017likelihood} proposed a penalized estimator for selecting the number of communities based on the maximum likelihood and established its consistency, showing that the estimator converges in probability to the true number of communities as the network size $n$ grows. The penalty function used in this estimator is of order $n\log n$. Although direct optimization of the likelihood function, which involves summing over all possible community assignments, is computationally intractable, this problem is commonly addressed using variational Expectation-Maximization (VEM) algorithms, which efficiently approximate the intractable marginal likelihood \citep{daudin2008mixture}.

Subsequently, \cite{hu2020corrected} considered a related approach based on the profile likelihood, where the latent community assignments are explicitly taken into account in the joint likelihood. They proved the consistency of the resulting penalized estimator and showed that, compared to the maximum likelihood–based approach of \cite{wang2017likelihood}, the associated penalty has a reduced order, consisting of a logarithmic term $\log n$ together with an additional term that is linear in the sample size. In \cite{cerqueira2020}, the authors replaced the likelihood and the profile likelihood with the integrated likelihood, obtained by integrating the likelihood function over the model parameters. They proved that the resulting estimator is strongly consistent, meaning that it converges almost surely to the true number of communities, for a penalty function of smaller order than in previous works, namely $n\log n$. Extensions of this estimator have been considered for model selection in the Degree-Corrected SBM \citep{Cerqueira24_DCSBM} and in multilayer and dynamic SBMs \citep{Arts2025}. Several other estimators based on the likelihood have been proposed and examined empirically, including those by \cite{daudin2008mixture}, \cite{latouche2012variational} and \cite{saldana2017many}; however, theoretical guarantees, including consistency, have not yet been derived for these approaches.

In this work, we propose a theoretical study of penalized estimators based on different forms of normalized likelihood. First, we prove that a penalized estimator based on the Normalized Maximum Likelihood (NML) is strongly consistent with a penalty term of order $\log n$. Although this estimator enjoys strong theoretical guarantees with a penalty term of order comparable to the minimal order proved in \cite{cerqueira2020}, its computation is computationally intractable, since its normalizing term requires summation over all possible networks, and the likelihood itself involves summation over all community configurations. To avoid computing the likelihood, we propose using the Normalized Maximum Complete Likelihood (NMCL). However, the computation of the normalizing constant also remains intractable in this case, as it requires summation over all possible networks. 

To address this computational issue, we also propose a penalized estimator based on the Decomposed Normalized Maximum Likelihood (DNML), which was originally introduced for general latent variable models by \cite{yamanishi2019decomposed}. Based on the seminal work of \cite{yamanishi2019decomposed}, it is possible to derive an explicit formula to compute the DNML, whose computational cost scales cubically with the network size. Unlike the NML and NMCL, which normalize over the entire network space jointly, the DNML decomposes the normalization across the network and the community assignments. Both the DNML and NMCL depend on estimated community assignments, which in practice can be obtained using any community detection algorithm. As a result, we prove that both estimators are strongly consistent with a penalty term of the same order as that of the penalized profile likelihood proposed by \cite{hu2020corrected}.
In terms of network sparsity, we proved that the three proposed estimators are strongly consistent in the regime where the connection probability scales as $\rho_n$, with $\rho_n \to 0$ and $n\rho_n \to \infty$. This is the most general sparsity regime under which consistency of estimators for the number of communities for the SBM has been established, as in \cite{cerqueira2020}. The previous works of \cite{wang2017likelihood} and \cite{hu2020corrected} proved consistency under the more restrictive regime where $n\rho_n / \log n \to \infty$. The sparsity regime under which we prove strong consistency of the proposed estimators includes the case where the connection probability scales as $\rho_n = \log n / n$, which coincides with the most general sparsity regime in which exact recovery of community assignments has been established for community detection methods \cite{Abbe2018}.

We also empirically evaluate the performance of the proposed estimator in comparison with other methods from the literature. Overall, the penalized DNML outperforms the others, particularly for unbalanced networks. In addition, we considered six benchmark network datasets to evaluate the performance of the proposed estimator on real-world network structures.

All codes for the DNML estimator are publicly available in the following GitHub repositories: \url{https://github.com/felipeb1914/ncmlnet} for the R language and \url{https://github.com/felipeb1914/ncml_py} for the Python language.

The remainder of the paper is organized as follows. In Section \ref{sec:model}, we present the SBM and introduce the notation used throughout the paper. In Section \ref{sec:model_selection}, we introduce the proposed estimators and establish the consistency of the estimator of the number of communities. In Section \ref{sec:computational}, we discuss the theoretical aspects related to the computation of the DNML and analyze its computational complexity. Simulation studies are carried out in Section \ref{sec:simulations} to investigate the empirical performance of the estimators. The analysis of real-world network data is presented in Section \ref{sec:applications}. Proofs of the main results are presented in the Appendix.

\section{Model}\label{sec:model}

In this section, we first introduce the stochastic block model (SBM). In this model, the existence of edges in the network depends on a latent partition of its vertices into communities. Let $\Z = (Z_1, \dots, Z_n)$ represent the division of the vertices into $k$ communities, such that the variables $Z_1, \dots, Z_n$ are independent and identically distributed and the probability that vertex $i$ belongs to community $a$, i.e., $Z_i = a$, is given by $\pi_a$, for $a \in [k]=\{1,\dots,k\}$ and $1 \leq i \leq n$. The edges in the network are Bernoulli random variables, conditionally independent given the communities. This model can be described as follows: 
\begin{equation}\label{eq:model_sbm}
\begin{split}
 \mathbb P(Z_i=a)&=\pi_a\,,\quad a=1,\dots,k \,\text{ and }\, 1\leq i \leq n \\
 A_{ij}\mid Z_i=a\,,\,Z_j=b & \sim \text{Bernoulli}(P_{ab})\,, \quad 1\leq i < j \leq n
\end{split} 
\end{equation}
for a given $k\times k$ symmetric matrix $P = (P_{ab})$. The probability vector $\pi$ and matrix $P$ are considered fixed and unknown.

We write the conditional distribution function of an observed network $\a$, given the communities assignments $\z\in [k]^n$, as
\begin{equation}\label{def-prob}
\P(\a\mid\z) = \prod_{1\leq a \leq b\leq k} \!\!P_{ab}^{o_{ab}(\a,\z)} (1-P_{ab})^{n_{ab}(\z)-o_{ab}(\a,\z)},
\end{equation}
and the distribution of the communities assignments $\z\in[k]^n$ as
\begin{equation}\label{def-prob-pi}
\Pp(\z) = \prod_{1 \leq a \leq k}\pi_a^{n_a(\z)},
\end{equation}
where the counts are given by
\begin{equation}\label{eq:count1}
     n_a(\z) = \sum_{i=1}^n \ind\{z_i=a\}, \quad 1\leq a \leq k
\end{equation}
\begin{equation}\label{eq:count3}
    o_{ab}(\a,\z) = \left\lbrace\begin{array}{cc}
        \sum\limits_{i,j=1}^n a_{ij}\ind\{z_i=a,z_j=b\}, &  a\neq b \\
         \sum\limits_{i < j}^n a_{ij}\ind\{z_i=a,z_j=b\}, &  a= b \\
    \end{array} \right.
\end{equation}
and
\begin{equation}\label{eq:count2}
     n_{ab}(\z) = \left\lbrace\begin{array}{cc}
        n_a(\z)n_b(\z), &  a\neq b \\
        \frac{1}{2}n_a(\z)(n_a(\z)-1), &  a= b \\
    \end{array} \right..
\end{equation}

The marginal likelihood function for an observed network $\a$ is given by
\begin{equation}\label{def:likelihood}
    \Pt(\a)=\sum\limits_{\e \in [k]^n}\Pt(\a , \e) = \sum\limits_{\e \in [k]^n}\P(\a \mid \e)\Pp(\e)\,.
\end{equation}

 Given a model with $k$ communities, define the parameter spaces $\mathcal{P}^k=\{P\in [0,1]^{k\times k}: P \text{ is symmetric}\}$ and $\Pi^k=\{\pi\in (0,1]^k : \sum_{a=1}^{k}\pi_a=1 \}$. The model selection problem refers to the process of choosing the appropriate number of communities $k$ in a SBM given an observed network. 
 
Moreover, in this work, we are interested in modeling sparse networks. In this context, we assume that the matrix $P$ may depend on the network size $n$ in such a way that its entries go to zero as $n \to \infty$. We reparameterize the matrix $P$ as $P = \rho_n S$, where $\rho_n \to 0$ as $n \to \infty$ and $S$ is fixed and does not depend on $n$.

\section{Model selection methods}\label{sec:model_selection}

Throughout this work, we assume that the pair $(\Z,\A)$ is generated from the SBM with an unknown $k_0$ communities and  unknown parameters $\pi_0 \in \Pi^{k_0}$ and $P_0 \in \mathcal{P}^{k_0}$. If the model has $k_0$ communities, it cannot be reduced to a model with fewer communities than $k_0$. This implies that $P_0$ does not have any identical columns.

To estimate the number of communities given a network $\A$, we define a penalized estimator of the form
\begin{equation}\label{eq:pen_estimator}
\hat{k}_n(\A) = \arg\max\limits_{1\leq k \leq n} \left\lbrace l_n(\A, k) - \text{pen}(k, n) \right\rbrace
\end{equation}
where $l_n(\textbf{A}, k)$ is a function of the network and $k$, and $\text{pen}(k,n)$ is a penalty function, which is increasing in both the number of vertices $n$ and $k$.

This penalized estimator is closely related to the Bayesian Information Criterion (BIC). In the classical BIC, the term $l(\mathbf{A},k)$ is taken to be the log-likelihood evaluated at the maximum likelihood estimator, while the penalty function $\text{pen}(k,n)$ is given by the number of free parameters in the model multiplied by the logarithm of the sample size.

Penalized estimators of the form \eqref{eq:pen_estimator} have been studied in the literature in recent years for model selection in the SBM. To estimate the number of communities \cite{wang2017likelihood} propose a penalized estimator based on the maximum likelihood function, that is, $l_n(\a, k) = \log\sup_{\pi \in  \Pi^{k}, P\in\mathcal{P}^{k} }\Pt(\a)$. They proved the consistency of this estimator, that is, the estimator converges in probability to the true number of communities as $n\to\infty$, considering the penalty function $\pen(k, n) = ({k(k + 1)/2}) n \log n$. 

In a later work by \cite{hu2020corrected}, the authors proved consistency in the case where $l_n(\a, k) = \log\sup_{\pi \in  \Pi^{k}, P\in\mathcal{P}^{k} }\max_{\z\in[k]^n}\Pt(\a|\z)$ is the profile log-likelihood function with $\pen(k, n) = ({k(k + 1)/2}) \log n + n \log k$. Note that, when using the profile log-likelihood, the order of the penalty function increases to a logarithmic term plus a linear term in $n$. Due to this additional term that is linear in $n$, the authors referred to this estimator as the corrected BIC estimator. The consistency of penalized estimators based on both the maximum likelihood and the profile likelihood has been established under the sparsity regime $\rho_n \to 0$ with $n\rho_n / \log n \to \infty$.

In \cite{cerqueira2020}, the authors proved consistency when $l_n(\a, k)$ corresponds to the integrated likelihood, obtained by integrating the likelihood function in \eqref{def:likelihood} over the parameters $\pi$ and $P$. The estimator is shown to be strongly consistent, that is, it equals the true number of communities eventually almost surely as $n\to \infty$, under a penalty function of order $k^3 \log n$. This estimator has also been studied for model selection for the Degree Corrected SBM by \cite{Cerqueira24_DCSBM} and for multilayer and dynamic  SBM by \cite{Arts2025}. With respect to sparsity, strong consistency of the penalized integrated likelihood estimator has been proved under a more general sparse regime, characterized by $\rho_n \to 0$ and $n\rho_n \to \infty$.

In this work, our interest lies in the study of estimators constructed from normalized likelihood-based quantities. Specifically, we investigate the consistency of three such estimators. The first one is based on the \textit{Normalized Maximum Likelihood} (NML), defined, for $\a \in \{0,1\}^{n\times n }$ as
\begin{equation}
    \NML_k(\a) = \dfrac{\sup\limits_{\pi \in  \Pi^{k}, P\in\mathcal{P}^{k} }\Pt(\a)}{\CNML(k,n)}
\end{equation}
where
\begin{equation}
    \CNML(n,k) = \sum\limits_{\x}\sup\limits_{\pi \in  \Pi^{k}, P\in\mathcal{P}^{k} }\Pt(\x).
\end{equation}

Penalized estimators based on the NML have been previously studied in other contexts. For instance, \cite{gassiat2003} proposed a penalized estimator derived from the normalized maximum likelihood function to estimate the number of hidden states in hidden Markov models. In \cite{yamanishi2019decomposed}, the authors empirically investigate the normalized-based estimators for latent variable models.

For the SBM, we define the penalized estimator based on the NML as
\begin{equation}\label{eq:est_NML}
    \hkNML(\A) = \arg\max_{1\leq k \leq n}\left\lbrace \log\NML_k(\A ) - \pen(k,n)\right\rbrace.
\end{equation}

An appropriate choice of the penalty function $\pen(k,n)$ is essential to guarantee the consistency of the proposed estimator. The penalty function that guarantees the consistency $\hkNML$ is of the form
\begin{equation}\label{eq:pen_NML}
\begin{split}
\pNML(k,n) &= \sum\limits_{i=1}^{k-1}\left( \frac{i(i+2)+1+\epsilon}{2} \right)\log n\\
&= \left( \frac{k(k-1)(2k-1)}{12} + \frac{(k-1)(k+1+\epsilon)}{2} \right)\log n
 \end{split}
\end{equation}
for any $\epsilon>0$.
This penalty function is of order $k^3 \log n$, which is of the same order as the penalty used in \cite{cerqueira2020} and smaller than those considered in \cite{wang2017likelihood} and \cite{hu2020corrected}. Our first result establishes strong consistency of the penalized NML estimator, that is, it correctly estimates the true number of communities almost surely as $n\to \infty$, under a more general sparsity regime, characterized by $\rho_n \to 0$ and $n\rho_n \to \infty$. The proof of this result is given in Appendix.

\begin{theorem}\label{teo:consistency_NML}
      Suppose that $\A$ is generated from a SBM with $k_0$ communities and parameters $\pi_0$ and $P_0=\rho_nS_0$ with either $\rho_n \to 0$ and $n\rho_n\to \infty$, or $\rho_n = 1$.  For the penalty function given by \eqref{eq:pen_NML}, for any $\epsilon>0$, we have that $\hkNML(\A)=k_0$     eventually almost surely as $n\to \infty$.
\end{theorem}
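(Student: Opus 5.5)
The plan is to split the event $\{\hkNML(\A)\neq k_0\}$ into underestimation ($k<k_0$) and overestimation ($k>k_0$). For each part we show that the probability decays summably in $n$, so that Borel--Cantelli gives the ``eventually almost surely'' conclusion. The key tool is a two-sided control of the normalizing constant. Write $\log\NML_k(\A) = \log\sup_{\pi,P}\Pt(\A) - \log\CNML(n,k)$. We will bound $\log \CNML(n,k) \le \big(\tfrac{k(k+1)}{2}+k-1\big)\tfrac{1}{2}\cdot 2\log n + k\log k + O(1)$ by comparing the maximized marginal likelihood with a mixture (Krichevsky--Trofimov-type) integrated likelihood. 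Concretely, $\sup_{\pi,P}\Pt(\x)\le \max_{\e}\sup_{\pi,P}\Pt(\x,\e)\cdot k^n$ is too crude. Instead we use
\[
\sup_{\pi,P}\Pt(\x)\le \sum_{\e}\sup_{\pi,P}\Pt(\x,\e)\le \sum_\e q_{KT}(\x,\e)\, n^{c_k},
\]
where $q_{KT}$ is the Dirichlet/Beta integrated likelihood used in \cite{cerqueira2020}. Its ratio to the maximized complete likelihood is bounded by $n^{c_k}$ with $c_k=\tfrac{k(k+1)}{4}\cdot 2+\tfrac{k-1}{2}$, via the standard KT bound $\log(\text{ML}/\text{KT})\le \tfrac{d}{2}\log N + C$ applied to each Bernoulli block (with $N\le n^2$) and to the multinomial. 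Summing $q_{KT}$ over all $(\x,\e)$ gives $1$, so $\CNML(n,k)\le n^{c_k}$. The lower bound is trivial: $\CNML(n,k)\ge 1$. Moreover $\CNML$ is nondecreasing in $k$, since models are nested.

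\textbf{Overestimation.} For $k>k_0$, $\hkNML=k$ requires
\[
\log\sup\Pt^{(k)}(\A)-\log\sup\Pt^{(k_0)}(\A)\ \ge\ \pNML(k,n)-\pNML(k_0,n)+\log\CNML(n,k)-\log\CNML(n,k_0).
\]
Dropping the nonnegative $\CNML$ difference is not allowed in this direction, so we bound $\log\CNML(n,k)\le c_k\log n$ and $\log\CNML(n,k_0)\ge 0$. We also bound the left side from above by $\log \sup\Pt^{(k)}(\A) - \log \mathbb P_{\pi_0,P_0}(\A)$. Then
\[
\sup\Pt^{(k)}(\A)\le \CNML(n,k)\,\NML_k(\A),
\]
and, since $\NML_k$ is a probability on networks, Markov's inequality gives
\[
\mathbb P\big(\NML_k(\A)/\mathbb P_{\pi_0,P_0}(\A)\ge n^{t}\big)\le n^{-t}.
\]
The increments of the penalty, $\tfrac{i(i+2)+1+\epsilon}{2}\log n$ for $i\ge k_0$, are designed to exceed $c_{i+1}-c_i$ plus an extra $\tfrac{1+\epsilon}{2}\log n$. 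This leaves a tail probability of order $n^{-(1+\epsilon')}$ for each $k$, where $\epsilon'>0$ accounts for the $\epsilon/2$ slack together with the extra $\log n$ coming from the counting. The large-$k$ range ($k$ up to $n$) has to be handled separately, using the crude bound $\log\CNML\le n\log k+\binom n2\log 2$ against a penalty of order $k^3\log n$. Summing over $k$ and then over $n$ converges, and Borel--Cantelli applies.

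\textbf{Underestimation.} For $k<k_0$, we use $\CNML(n,k)\ge1$ and $\CNML(n,k_0)\le n^{c_{k_0}}$, so the normalizing and penalty terms contribute only $O(\log n)$. It then suffices to show that almost surely
\[
\log\mathbb P_{\pi_0,P_0}(\A)-\log\sup\Pt^{(k)}(\A)\ge c\, n^2\rho_n
\]
eventually. Since $n^2\rho_n\gg n\log n\gg\log n$ when $n\rho_n\to\infty$, this suffices. We prove it by writing $\sup\Pt^{(k)}(\A)\le k^n\max_{\e}\sup_P\P(\A\mid\e)$. For a fixed merged labelling, the profile likelihood ratio against the truth is controlled by a Kullback--Leibler-type term of order $n^2\rho_n$, positive because $P_0$ has no identical columns. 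A Bernstein concentration bound, uniform over the $k^n$ labellings via a union bound, completes the argument; the extra factor $k^n=e^{n\log k}$ is negligible against $n^2\rho_n$.

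\textbf{Main obstacle.} The main obstacle is this underestimation step in the sparse regime $n\rho_n\to\infty$ without a $\log n$ factor. There the Bernstein deviations over $k^n$ labellings are of order $\sqrt{n^3\rho_n}+n$, which is still $o(n^2\rho_n)$. The difficulty is the careful handling of the true labels $\Z$, which must be shown to have all community sizes of order $n$ almost surely eventually, and of blocks with small counts. We would first attempt to reuse the corresponding underfitting lemma from \cite{cerqueira2020}, which applies to maximized likelihoods with the same sparsity assumptions.
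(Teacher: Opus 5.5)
Your overall plan is the paper's: bound $0\le\log\CNML(n,k)\le c_k\log n+d_k$ with $c_k=\frac{k(k+2)-1}{2}$ via the Beta/Dirichlet mixture summing to one over $(\x,\e)$; handle $k>k_0$ by change of measure using $\sum_{\a}\NML_k(\a)=1$; reduce $k<k_0$ to the underfitting lemma of \cite{cerqueira2020}. The underestimation half is fine. Your aside that $n^2\rho_n\gg n\log n$ is false under $n\rho_n\to\infty$ alone (take $\rho_n=\log\log n/n$), but it is harmless because only $O(n)$ and $O(\log n)$ terms occur there.

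The overestimation half does not close as you set it up. To enlarge the event $\{\log\NML_k(\A)-\pNML(k,n)\ge\log\NML_{k_0}(\A)-\pNML(k_0,n)\}$ you need an \emph{upper} bound on $\CNML(n,k_0)$, and no bound at all on $\CNML(n,k)$. You instead use $\log\CNML(n,k_0)\ge0$, which is the wrong direction, together with $\log\CNML(n,k)\le c_k\log n$. Any version in which the larger model's constant enters the exponent fails for small $\epsilon$. For instance, combining $\log\CNML(n,k)\le c_k\log n+d_k$ with monotonicity $\CNML(n,k)\ge\CNML(n,k_0)$, Markov gives for $k=k_0+1$ only $\Pto(\hkNML(\A)=k_0+1)\le e^{d_{k_0+1}}n^{c_{k_0+1}-c_{k_0}-1-\epsilon/2}=e^{d_{k_0+1}}n^{k_0+\frac12-\frac{\epsilon}{2}}$. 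This is not summable in $n$ unless $\epsilon>2k_0+3$, whereas the theorem is for every $\epsilon>0$. Your stated calibration, increments exceeding $(c_{i+1}-c_i)\log n$ plus slack, also does not match any bound you can actually derive, and as an inequality it is false at $i=1$.

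The missing observation is that $\CNML(n,k)$ cancels exactly, so only the true model's constant matters. Bound the true likelihood:
\[
\Pto(\A)\le\sup_{\pi\in\Pi^{k_0},P\in\mathcal P^{k_0}}\Pt(\A)=\CNML(n,k_0)\NML_{k_0}(\A)\le e^{d_{k_0}}n^{c_{k_0}}\NML_{k_0}(\A).
\]
On $\{\hkNML(\A)=k\}$ one has $\NML_{k_0}(\A)\le e^{\pNML(k_0,n)-\pNML(k,n)}\NML_k(\A)$. Summing over $\a$ with $\sum_{\a}\NML_k(\a)=1$ gives
\[
\Pto(\hkNML(\A)=k)\le e^{d_{k_0}}n^{c_{k_0}}e^{\pNML(k_0,n)-\pNML(k,n)}\le e^{d_{k_0}}n^{-1-\epsilon/2}\quad\text{for every } k>k_0,
\]
because $\pNML(k_0+1,n)-\pNML(k_0,n)=(c_{k_0}+1+\frac{\epsilon}{2})\log n$. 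The penalty is calibrated to $c_{k_0}$, not to differences of the $c_i$. To finish:
\begin{itemize}
\item For $k\in(k_0,\log n]$, summing costs only a factor $\log n$.
\item For $k>\log n$, use $\pNML(k,n)\gtrsim(\log n)^4$ against the factor $n\cdot n^{c_{k_0}}$.
\item Borel--Cantelli then applies.
\end{itemize}
This also removes your separate large-$k$ step, which as stated fails anyway. The crude bound $\log\CNML(n,k)\le\binom n2\log 2+n\log k$ is not dominated by a penalty of order $k^3\log n$ when $\log n<k\ll(n^2/\log n)^{1/3}$.
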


It is worth noting that the numerator of the NML corresponds to the maximum marginal likelihood, whose exact computation is infeasible since the marginal likelihood function, given by \eqref{def:likelihood}, involves a sum over all possible community assignments, which grows exponentially with the network size. However, in practice, this problem is commonly addressed using variational Expectation-Maximization (VEM) algorithms, which approximate the intractable marginal likelihood efficiently \citep{daudin2008mixture}.
Even though the marginal likelihood can be approximated through VEM algorithms, 
the computation of the normalization constant $\CNML$ remains infeasible, as it involves a sum over all possible networks. Consequently, although Theorem~\ref{teo:consistency_NML} guarantees strong consistency of the estimator under a more general sparsity regime, its practical implementation is infeasible.

To address this issue, one possible approach is to replace the marginal likelihood function with the complete likelihood function. In this way, we define the \textit{Normalized Maximum Complete Likelihood} (NMCL) for $\a\in\{0,1\}^n$ and $\z\in[k]^n$ as
\begin{equation}
    \NMCL_k(\a,\z) = \dfrac{\sup\limits_{\pi \in  \Pi^{k}, P\in\mathcal{P}^{k} }\Pt(\a,\z)}{\CNMCL(n,k)}
\end{equation}
where
\begin{equation}
    \CNMCL(n,k) = \sum\limits_{\e}\sum\limits_{\x}\sup\limits_{\pi \in  \Pi^{k}, P\in\mathcal{P}^{k} }\Pt(\x,\e).
\end{equation}

For any community assignment $\mathbf{z} \in [k]^n$, the maximum complete likelihood is easily computable, making the numerator of the NCML tractable. However, the normalizing constant $\CNMCL$ involves a sum over all possible community assignments of $n$ nodes into $k$ communities and over all possible networks, which also remains computationally infeasible.

While the NML criterion normalizes the maximum marginal likelihood over all possible networks, the NCML normalizes the maximum complete likelihood over all possible networks and community assignments. To account for the structure of the network $\a$ and the community assignment $\z \in [k]^n$ separately, \cite{yamanishi2019decomposed} proposed the \emph{Decomposed Normalized Maximum Likelihood} (DNML), which normalizes each component independently.
The DMNL is defined, for $\a \in \{0,1\}^{n\times n}$ and $\z\in[k]^n$, as

\begin{equation}\label{def:DNML}
    \DNML_k(\a, \z) =\DNML_k(\a\mid \z)\DNML_k(\z)
\end{equation}
where
\begin{equation}\label{def:DNML_A}
    \DNML_k(\a\mid \z) = \dfrac{\sup\limits_{P \in \mathcal{P}^{k}}\P(\a\mid \z)}{\CDMNL(\z,k)},
\end{equation}
and
\begin{equation}\label{def:DNML_z}
\DNML_k(\z) = \frac{\sup\limits_{\pi \in \Pi^{k}}\Pp(\z)}{\CDMNLz(n,k)},
\end{equation}
with 
\begin{equation}
    \CDMNL(\z,k) = \sum\limits_{\x}\sup\limits_{P \in \mathcal{P}^{k}}\P(\x\mid \z),
\end{equation}
and
\begin{equation}
\CDMNLz(n,k) = \sum\limits_{\e}\sup\limits_{\pi \in \Pi^{k}}\Pp(\e).
\end{equation}
In this case, observe that the normalizing term $\CDMNL(\z,k)$ is taken over all possible networks conditional on the community assignment $\z$, while the normalizing term $\CDMNLz(n,k)$ is taken over all networks with $n$ nodes. By this definition, DNML satisfies
\begin{equation}
\sum_{\z} \sum_{\a}\DNML_k(\a \mid \z)\,\DNML_k(\z) = 1.
\end{equation}

The key difference between the NML, the NMCL, and the DNML lies in the way normalization is performed. The NML and NMCL normalize over the entire network space at once, whereas the DNML performs the normalization separately for each component (the network and the community assignments). An important advantage of the DNML is that, by decomposing the normalization into two terms, $\CDMNL$ and $\CDMNLz$, it becomes computationally feasible, as discussed in \cite{yamanishi2019decomposed}. The computation of the DNML is presented in Section \ref{sec:computational}.

Both the NMCL and the DNML depend on the community assignments $\mathbf{z} \in [k]^n$. To define the model selection criterion, rather than calculating the NMCL and DNML for all possible community assignments, 
we consider the estimated communities $\hz \in [k]^n$. To this end, we define
\begin{equation} \label{eq:estimate_z}
    \hz = \arg\max\limits_{\z \in [k]^n} \left\lbrace\sup\limits_{{\pi \in  \Pi^{k}, P\in\mathcal{P}^{k}}}\Pt(\a, \z) \right\rbrace.
\end{equation}

It is important to notice that $\hz$ depends on the network $\a$. This dependence is omitted throughout the paper for clarity, and will be explicitly indicated whenever it is relevant.

Finally, we define the penalized estimators based on NMCL and DNML by
\begin{equation}\label{eq:est_NMCL}
    \hkNMCL(\A) = \arg\max_{1\leq k \leq n}\left\lbrace \log\NMCL_k(\A, \hz ) - \pen(k,n)\right\rbrace,
\end{equation}
and
\begin{equation}\label{eq:est_DNML}
    \hk(\A) = \arg\max_{1\leq k \leq n}\left\lbrace \log\DNML_k(\A, \hz ) - \pen(k,n)\right\rbrace.
\end{equation}

To properly choose the penalty function in order to guarantee the consistency of the estimators $\hkNMCL$ and $\hk$, we add an extra term to the penalty function \eqref{eq:pen_NML} used for the NML estimator, which is linear in the network size. Thus, we define
\begin{equation}\label{eq:pen_DNML}
\begin{split}
\pDNML(k,n) &= \sum\limits_{i=1}^{k-1}\left( \frac{i(i+2)+1+\epsilon}{2} \right)\log n + n\sum\limits_{i=1}^{k-1}\log(i) \\
&= \left( \frac{k(k-1)(2k-1)}{12} + \frac{(k-1)(k+1+\epsilon)}{2} \right)\log n + n\log((k-1)!)
\end{split}
\end{equation}
for any $\epsilon>0$. This choice of an appropriate form of the penalty function is crucial to prove that the proposed estimators do not overestimate the true number of communities, as discussed in Section \ref{sec:non_overestimation}.

The resulting penalty has the same order as that considered in \cite{hu2020corrected}, since the community assignments are estimated and incorporated into the estimator. In both cases, accounting for the uncertainty associated with the latent labels introduces an additional term linear in the network size, leading to penalties of comparable order.

The following result shows that both estimators, $\hk$ and $\hkNMCL$, are strongly consistent under the same general sparsity regime as the NML estimator.

\begin{theorem}\label{teo:consistency_DNML}
      Suppose that $\A$ is generated from a SBM with $k_0$ communities and parameters $\pi_0$ and $P_0=\rho_nS_0$ with either $\rho_n \to 0$ and $n\rho_n\to \infty$, or $\rho_n = 1$.  For the penalty function given by \eqref{eq:pen_DNML}, for any $\epsilon>0$, we have that $\hk(\A)=k_0$ and $\hkNMCL(\A)=k_0$ eventually almost surely as $n\to \infty$.
\end{theorem}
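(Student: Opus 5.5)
We split the event $\{\hk(\A)\neq k_0\}$ into $\{\hk(\A)>k_0\}$ (overestimation) and $\{\hk(\A)<k_0\}$ (underestimation). For each we show the probability is summable in $n$, so Borel--Cantelli gives the almost sure statement. The same argument handles $\hkNMCL$. The key observation is that, for $\hz$ as in \eqref{eq:estimate_z}, $\log\NMCL_k(\A,\hz)$ and $\log\DNML_k(\A,\hz)$ both equal the maximized profile complete log-likelihood $\max_{\z}\sup_{\pi,P}\log\Pt(\A,\z)$ minus a log normalizing constant. So they differ from each other only through the constants $\log\CNMCL(n,k)$ versus $\log\CDMNL(\hz,k)+\log\CDMNLz(n,k)$. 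We will bound these deterministically. Standard Stirling-type estimates give $\log\CDMNLz(n,k)=\frac{k-1}{2}\log n+O(1)$. Since $\sup_P\P(\x\mid\z)$ factorizes over blocks, $\log\CDMNL(\z,k)\le\frac{k(k+1)}{2}\cdot\frac12\log n^2+O(k^2)$. Finally, $\CNMCL\le \CDMNL\cdot\CDMNLz$ summed over assignments, which gives an upper bound of order $k^2\log n + n\log k$. In every case the normalizers are at most $c\,k^2\log n$, plus $n\log k$ for the NMCL.

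\emph{Non-overestimation.} For $k>k_0$ we compare the criterion at $k$ with the criterion at $k_0$. We have
\[
\max_{\z\in[k]^n}\sup_{\pi,P}\Pt(\A,\z)\le \sum_{\z\in[k]^n}\sup_{\pi,P}\Pt(\A,\z),
\]
and this sum is controlled through a mixture (Krichevsky--Trofimov type) integrated complete likelihood $\mathrm{KT}_k(\A)$ up to a factor $\exp(\frac{k(k+1)}{2}\log n+O(k^2))$. The assignment sum contributes the $k^n$-type factor that the term $n\log((k-1)!)$ in \eqref{eq:pen_DNML} is designed to absorb. On the $k_0$ side, the profile likelihood is at least $\Pto(\A,\Z)$. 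By Markov's inequality,
\[
\mathbb P\bigl(\mathrm{KT}_k(\A)\ge \Pto(\A,\Z)\,e^{t_n}\bigr)\le e^{-t_n}.
\]
We sum over $k_0<k\le n$ and choose $t_n$ equal to the penalty increment minus the normalizer and combinatorial differences. The increment $\pDNML(k,n)-\pDNML(k_0,n)\ge \sum_{i=k_0}^{k-1}\frac{i(i+2)+1+\epsilon}{2}\log n + n\log\frac{(k-1)!}{(k_0-1)!}$ should leave a surplus of at least $(1+\epsilon/2)\log n$, which makes the probabilities summable in $n$. The delicate part is bookkeeping: we must verify that the exact coefficients $\frac{i(i+2)+1+\epsilon}{2}$ dominate the parameter-count terms of the normalizers uniformly in $k$ up to $n$. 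For $k$ large we would use the crude bound $n\log((k-1)!)\gg n\log k$ to dispose of the range $k\ge \log n$.

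\emph{Non-underestimation.} For $k<k_0$ the penalty difference is only $O(k_0^3\log n+n)$. We show that the log profile complete likelihood drops by order $n^2\rho_n$ for every $\z\in[k]^n$. This uses the fact that $P_0$ has no identical columns: merging communities of the true assignment produces a strictly positive Kullback--Leibler deficit. To control this uniformly over $\z$, we would compare $\sup_P\log\P(\A\mid\z)$ with its expectation-based counterpart via a Bernstein/union bound over the $k^n$ assignments. The deviation is $O(\sqrt{n^2\rho_n\cdot n})+O(n)=o(n^2\rho_n)$, because $n\rho_n\to\infty$. The deficit $cn^2\rho_n$ then beats both $n\log((k_0-1)!)$ and the normalizer terms of order $n$, and the exponential tail bounds are summable.

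This last step is the main obstacle. In the sparse regime with only $n\rho_n\to\infty$, the union bound over $k^n$ labelings is tight, so the concentration must be done on the block-count statistics $o_{ab}$ with a Bernstein inequality whose variance term is $n^2\rho_n$. In addition, we must identify the population minimizer over merged/split block structures, and do so uniformly over empirical community proportions. Here we would follow the strategy of \cite{cerqueira2020}, adapted from integrated to profile likelihood, where the $n\log k$ slack is harmless.
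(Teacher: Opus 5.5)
Your overall architecture matches the paper: the split into over- and underestimation, Borel--Cantelli, and KT-type comparisons to bound the normalizers. Your underestimation sketch is also essentially what the paper does. It invokes Lemma~\ref{lemma:cerqueira20} (Lemmas 7--8 of \cite{cerqueira2020}) for $k=k_0-1$, reduces $k<k_0-1$ by monotonicity of the profile likelihood, and uses the uniform $O(\log n)$ normalizer bound of Proposition~\ref{prop:razao_L_KT}. The gap is in non-overestimation: your bookkeeping does not close at $k=k_0+1$ with the exact penalty \eqref{eq:pen_DNML}.

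First, the Markov step is misstated. We have $\mathbb E[\mathrm{KT}_k(\A)/\Pto(\A,\Z)]=\sum_{\a}\mathrm{KT}_k(\a)\,\#\{\z\in[k_0]^n:\Pto(\a,\z)>0\}$, which equals $k_0^n$ whenever $P_0$ has entries in $(0,1)$, so you only get $k_0^n e^{-t_n}$. This factor comes from the true model, since $\Pto(\a)=\sum_{\z\in[k_0]^n}\Po(\a\mid\z)\mathbb P_{\pi_0}(\z)\le k_0^n\sup\Pt(\a,\hz(\a))$. It is exactly the $n\log k_0$ in Proposition~\ref{prop:bound_prob_overestimation_DNML}. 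At $k=k_0+1$ the linear part of $\pen(k,n)-\pen(k_0,n)$ is exactly $n\log k_0$, so it is entirely consumed. Nothing is left for the ``$k^n$-type factor'' you attribute to the assignment sum of the larger model (already $(k_0+1)^n>k_0^n$). Nor is anything left for the extra $n\log k_0$ that your crude bound on $\log\CNMCL(n,k_0)$ would add. That crude bound is unnecessary anyway: $\CNMCL(n,k)=\sum_{\e}\CDMNL(\e,k)\sup_{\pi}\Pp(\e)\le\max_{\e}\CDMNL(\e,k)\,\CDMNLz(n,k)$, which is $O(k^2\log n)$ with no linear term.

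Second, and more fundamentally, bounding $\max_{\z\in[k]^n}\sup\Pt(\A,\z)$ by $\mathrm{KT}_k(\A)$ costs a further $\frac{k(k+1)}{2}\log n$, plus $\frac{k-1}{2}\log n$ for $\pi$. This comes on top of the $\frac{k_0(k_0+2)-1}{2}\log n$ you must pay to lower-bound $\log\DNML_{k_0}(\A,\hz)$. Nothing offsets it: you would need a lower bound on $\log\CDMNL(\hz^*,k)$ at the random maximizer, and none holds uniformly in $\z$ (empty or tiny blocks contribute $C_{\text{MN}}(0,2)=1$). The $\log n$ coefficient of $\pen(k_0+1,n)-\pen(k_0,n)$ is $\frac{k_0(k_0+2)+1+\epsilon}{2}$, which exceeds the $k_0$-side cost by only $1+\epsilon/2$. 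Hence your bound at $k=k_0+1$ is at least of order $n^{\frac{(k_0+1)(k_0+2)}{2}-1-\epsilon/2}$. This is not summable unless $\epsilon>(k_0+1)(k_0+2)$, whereas the theorem is claimed for every $\epsilon>0$ with $k_0$ unknown.

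The missing idea is to make no comparison at all on the $k$ side. Instead, use that the criterion is itself a normalized distribution: $\sum_{\a}\DNML_k(\a,\hz^*(\a))\le\sum_{\e}\sum_{\a}\DNML_k(\a\mid\e)\DNML_k(\e)=1$, and likewise $\sum_{\a}\NMCL_k(\a,\hz^*(\a))\le 1$. On $\{\hk=k\}$ one bounds $\Pto(\a)\le k_0^n n^{\frac{k_0(k_0+2)-1}{2}}e^{c_{k_0}}\DNML_{k_0}(\a,\hz(\a))\le k_0^n n^{\frac{k_0(k_0+2)-1}{2}}e^{c_{k_0}+\pen(k_0,n)-\pen(k,n)}\DNML_k(\a,\hz^*(\a))$ and sums over $\a$. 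This is Proposition~\ref{prop:bound_prob_overestimation_DNML}. The penalty then leaves exactly $e^{c_{k_0}}n^{-1-\epsilon/2}$ at $k=k_0+1$, and the $n\log((k-1)!)$ term disposes of $k>\log n$. KT enters only through Proposition~\ref{prop:razao_L_KT}, to bound the $k_0$-side normalizer.
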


Even though the estimators $\hkNML$ and $\hkNMCL$ are theoretically strongly consistent for a general sparsity regime, their computation is infeasible in practice due to the intractability of the associated normalization constants. In contrast, the estimator $\hk$ enjoys the same theoretical guarantees, including strong consistency, while remaining computationally feasible, and can be efficiently implemented as described in Section~\ref{sec:computational}.

The proofs of Theorems \ref{teo:consistency_NML} and \ref{teo:consistency_DNML} are divided in two parts. First, Proposition \ref{prop:non_overestimation_DNML} and Proposition \ref{prop:non_overestimation_NML} guarantee that the proposed estimators do not overestimate the true number of communities $k_0$. In the second part, Proposition~\ref{prop:non_underestimation} states that these estimators do not underestimate the true number of communities. In fact, using a penalty term of order $\log n$ is sufficient to ensure that all the proposed estimators do not underestimate the true number of communities. However, the additional term in the penalty \eqref{eq:pen_DNML} is included to guarantee that the DNML and NMCL estimators do not overestimate the true number of communities (see Propositions \ref{prop:non_overestimation_DNML} and \ref{prop:non_overestimation_NML}). In the simulation studies presented in Section \ref{sec:simulations}, we further discuss the impact of the extra term in the penalty function on the performance of the DNML estimator. As expected, the addition of a term of order $n$ in the penalty function \eqref{eq:pen_DNML}  tends to make the estimator underestimate the true number of parameters, as it favors simpler models

\section{Computational properties of the DNML}\label{sec:computational}

As discussed before, the exact computation of the NML and NMCL estimators is intractable, as it involves summing over all possible networks and community assignments. Although the DNML estimator, proposed in \eqref{eq:est_DNML}, involves computing $\CDMNL$ and $\CDMNLz$, which require summing over all possible networks and all possible community assignments, it has been shown to be computationally feasible by \cite{yamanishi2019decomposed}. For the sake of completeness, in this work we also develop how to compute this quantity based on the procedure proposed by \cite{yamanishi2019decomposed}. It is important to note that they proposed this calculation for latent variable models in general; however, in the case of the SBM, no theoretical results regarding the consistency of the penalized estimator of the form \eqref{eq:est_DNML} have been established.

For any $\e\in [k]^n$, define the normalizing term of the of the maximum likelihood for a multinomial distribution with $n$ trials and $k$ categories as
\[C_{\text{MN}}(n,k) = \sum\limits_{\e}\prod_{a=1}^k\left( \frac{n_a(\e)}{n}\right)^{n_a(\e)}.\]
For any $\e\in[k]^n$, using the fact that the maximum likelihood estimators of $\pi_a$ is given by $n_a(\e)/n$, we have that
\begin{equation}\label{eq:relation_CMN_z}
    \CDMNLz(n,k) = C_{\text{MN}}(n,k).
\end{equation}

In \cite{Kontkanen2007}, the authors proved that, in the general case, the normalizing constant for the maximum likelihood estimate of a multinomial distribution with $Q$ components and a total of  $m$ trials can be computed recursively by
\begin{equation}
    \begin{split}
        &C_{\text{MN}}(m,1)=1,\\
        & C_{\text{MN}}(m,2) = \sum_{t=0}^{m} \frac{m!}{t!(m-t)!}\left(\frac{t}{m}\right)^{t}\left(\frac{m-t}{m}\right)^{m-t},\\
        &C_{\text{MN}}(m,Q) = C_{\text{MN}}(m,Q-1) +\frac{m}{Q-2}C_{\text{MN}}(m,Q-2).
    \end{split}
\end{equation}

Computing $C_{\text{MN}}(m,2)$ requires a sum of length $m$, yielding a linear cost in $m$. In that way, when calculating $C_{\text{MN}}(m,q)$ for $q > 2$ until $q = Q$, the cost of each iteration takes a constant time since the values of $C_{\text{MN}}(m,q-1)$ and $C_{\text{MN}}(m,q-2)$ have already been calculated in previous steps. Thus, the quantity $C_{\text{MN}}(m,Q)$ is computable in time $O(m + Q)$, implying that $\CDMNLz(n,k)$ is computable in time $O(n+k)$.

The next result states that the normalizing constant $\CDMNL$ can be expressed as a product over pairs of communities of the normalizing constant of a multinomial distribution.  In this way, we reduce the problem of computing 
$\CDMNL$ from a sum over all networks to a product over all pairs of communities, which corresponds to a much smaller space. The next result also states that $\CDMNL$ can be computable in linear time with respect to $n_{ab}$.  This result is proved using the same approach as in \cite{yamanishi2019decomposed}.

\begin{proposition}\label{prop:aprox_C}
    For any $\z\in [k]^n$, we have that
    \begin{equation}
         \CDMNL(\z,k) = \prod\limits_{a \leq b}C_{\text{MN}}(n_{ab}(\z),2),
    \end{equation}
where
\begin{equation}
    C_{\text{MN}}(m,2) = \sum_{t=0}^{m} \frac{m!}{t!(m-t)!}\left(\frac{t}{m}\right)^{t}\left(\frac{m-t}{m}\right)^{m-t}.
\end{equation}
\end{proposition}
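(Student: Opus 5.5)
Fix $\z\in[k]^n$. The plan is to compute $\sup_{P\in\mathcal P^k}\P(\x\mid\z)$ explicitly and then sum over $\x$ block by block. By \eqref{def-prob}, $\P(\x\mid\z)$ is a product over pairs $a\le b$ of factors $P_{ab}^{o_{ab}}(1-P_{ab})^{n_{ab}-o_{ab}}$. Each factor involves a different free coordinate $P_{ab}$ of the symmetric matrix $P$, and $\mathcal P^k$ is a product of intervals $[0,1]$ over the pairs $a\le b$. Hence the supremum factorizes into a product of one-dimensional suprema. Each one-dimensional supremum is attained at $\hat P_{ab}=o_{ab}(\x,\z)/n_{ab}(\z)$, with the convention $0^0=1$, which also covers $o_{ab}=0$, $o_{ab}=n_{ab}$ and $n_{ab}=0$. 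This gives
\begin{equation}
\sup_{P\in\mathcal P^k}\P(\x\mid\z)=\prod_{a\le b}\Big(\tfrac{o_{ab}(\x,\z)}{n_{ab}(\z)}\Big)^{o_{ab}(\x,\z)}\Big(\tfrac{n_{ab}(\z)-o_{ab}(\x,\z)}{n_{ab}(\z)}\Big)^{n_{ab}(\z)-o_{ab}(\x,\z)}.
\end{equation}

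Next I would split the sum over networks. With $\z$ fixed, the unordered node pairs $\{i,j\}$, $i<j$, are partitioned into blocks $B_{ab}(\z)$, $a\le b$: the block $B_{ab}$ contains the pairs whose endpoints carry labels $\{a,b\}$, and it has exactly $n_{ab}(\z)$ elements. Here I would check that the counting convention in \eqref{eq:count3}, which sums over ordered pairs when $a\ne b$, indeed counts each edge between communities $a$ and $b$ exactly once, consistently with $n_{ab}=n_an_b$. A symmetric network $\x$ with zero diagonal is then the same thing as a tuple $(\x_{B_{ab}})_{a\le b}$ of independent binary vectors, one for each block. Moreover $o_{ab}(\x,\z)$ depends only on $\x_{B_{ab}}$. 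Therefore the sum of the product over $\x$ equals the product over $a\le b$ of sums over $\x_{B_{ab}}\in\{0,1\}^{n_{ab}}$.

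Within a single block, I would group the binary vectors by their number of ones $t$. There are $\binom{n_{ab}}{t}$ vectors with $t$ ones, so the block sum equals
\begin{equation}
\sum_{t=0}^{n_{ab}}\binom{n_{ab}}{t}\Big(\tfrac{t}{n_{ab}}\Big)^t\Big(\tfrac{n_{ab}-t}{n_{ab}}\Big)^{n_{ab}-t}=C_{\text{MN}}(n_{ab}(\z),2),
\end{equation}
which is the claimed formula. The only delicate points are bookkeeping. The first is the edge case $n_{ab}=0$, for example an empty community or $n_a=1$ with $a=b$. There the block is empty and its factor should be $1$, so I would adopt the convention $C_{\text{MN}}(0,2)=1$. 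The second is the verification that the blocks exactly partition the pairs $i<j$, so that the product decomposition of the sum is legitimate. I expect the convention check in \eqref{eq:count3} to be the main, though minor, obstacle.

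Finally, each factor $C_{\text{MN}}(n_{ab},2)$ is a sum of $n_{ab}+1$ terms, which gives computation in time linear in $n_{ab}$.
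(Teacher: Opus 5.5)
Your proposal is correct and follows essentially the same route as the paper. You factor the conditional maximum likelihood via $\hat P_{ab}=o_{ab}/n_{ab}$, partition the node pairs into the blocks $E_{ab}(\z)$ so that the sum over networks becomes a product of block sums, and identify each block sum with $C_{\text{MN}}(n_{ab}(\z),2)$ by grouping configurations by their number of edges. Your explicit checks of the ordered-pair convention in \eqref{eq:count3} and of the degenerate case $n_{ab}(\z)=0$ fill in bookkeeping that the paper leaves implicit.
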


\begin{proof}
 First,  for any network $\x$, define the subnetwork $\mathbf{x}_{(ab)}=\{x_{ij}: z_i=a, z_j=b\}$ of $\x$, that is, $\mathbf{x}_{(ab)}$ is the subnetwork of $\x$ consisting of the set of pairs of vertices that belongs to community $a$ and $b$. 
 
 Since $o_{ab}(\mathbf{x}_{(ab)},\z)$ counts the number of edges out of the total $n_{ab}(\z)$ possible edges, the quantity $C_{\text{MN}}(n_{ab}(\z),2)$ can be interpreted as the normalizing term of the maximum likelihood for a multinomial distribution with two categories (edges and non-edges) and a total of $n_{ab}(\z)$ trials. Thus, for any community assignment $\z\in [k]^n$, observe that, for $1 \leq a\leq b\leq k$,
\begin{equation}
  \begin{split}
        C_{\text{MN}}(n_{ab}(\z),2) &= \sum_{t=0}^{n_{ab}(\z)} \frac{n_{ab}(\z)!}{t!(n_{ab}(\z)-t)!}\left(\frac{t}{n_{ab}(\z)}\right)^{t}\left(1-\frac{t}{n_{ab}(\z)}\right)^{n_{ab}(\z)-t} \\
        & =\sum\limits_{\mathbf{x}_{(ab)}}  \left( \dfrac{o_{ab}(\mathbf{x}_{(ab)},\z)}{n_{ab}(\z)} \right)^{o_{ab}(\mathbf{x}_{(ab)},\z)}\left(1- \dfrac{o_{ab}(\mathbf{x}_{(ab)},\z)}{n_{ab}(\z)} \right)^{n_{ab}(\z)-o_{ab}(\mathbf{x}_{(ab)},\z)}.
  \end{split}
\end{equation}

For a given $\z\in[k]^n$, the conditional maximum likelihood estimators of $P_{ab}$ is given by  $o_{ab}(\a,\z)/n_{ab}(\z)$ and we can write
\begin{equation}\label{eq:max_likelihood2}
    \sup_{P \in \mathcal{P}^{k}} \P(\a\mid \z)  = \prod\limits_{a \leq b}  g_{ab}(\x,\z).
\end{equation}
with
\[g_{ab}(\x,\z) =  \left( \dfrac{o_{ab}(\a,\z)}{n_{ab}(\z)} \right)^{o_{ab}(\a,\z)}\left(1- \dfrac{o_{ab}(\a,\z)}{n_{ab}(\z)} \right)^{n_{ab}(\z)-o_{ab}(\a,\z)}.\]
Thus,
    \begin{equation}
    \CDMNL(\z,k) = \sum\limits_{\x}\prod\limits_{a \leq b}   g_{ab}(\x,\z).
\end{equation}

For a fixed $\z\in[k]^n$, let $E_{ab}(\z)=\{\{i,j\}:z_i=a,\,z_j=b\}$ denote the set of potential edges between communities $a$ and $b$. The family $\{E_{ab}(\z)\}_{a\le b}$ forms a partition of the edge set, so every adjacency matrix $\x\in\{0,1\}^{n\times n}$ can be uniquely written as the tuple of subconfigurations $(\mathbf{x}_{(ab)})_{a\le b}$, where $\mathbf{x}_{(ab)}=(x_e)_{e\in E_{ab}}$. Note that, for fixed $\z\in[k]^n$, the quantity $o_{ab}$ depends on $\x$ only through the subnetwork $\mathbf{x}_{(ab)}$, while $n_{ab}$ depends only on $\z$. Thus we can write $g_{ab}(\x,\z)=g_{ab}(\mathbf{x}_{(ab)},\z)$. Hence
\begin{equation}
    \begin{split}
        \CDMNL(\z,k) &= \sum\limits_{\mathbf{x}_{(11)}}\sum\limits_{\mathbf{x}_{(12)}}\dots \sum\limits_{\mathbf{x}_{(kk)}}\prod\limits_{a \leq b} g_{ab}(\mathbf{x}_{(ab)},\z)  \\
        & =\prod\limits_{a \leq b}\sum\limits_{\mathbf{x}_{(ab)}} g_{ab}(\mathbf{x}_{(ab)},\z)\\
        & = \prod\limits_{a \leq b}C_{\text{MN}}(n_{ab}(\z),2).
    \end{split}
\end{equation}

\end{proof}

By combining Proposition \ref{prop:aprox_C} and Equation \eqref{eq:relation_CMN_z}, the logarithm of the DNML, defined in \eqref{def:DNML}, can be written as
\begin{equation}\label{eq:DNML_final}
\begin{split}
\log\DNML_k(\a\mid \z) &= \sum_{a=1}^n n_{a}(\z)\log\left(\frac{n_{a}(\z)}{n}\right) +
\sum\limits_{a \leq b}  {o_{ab}(\a,\z)}\log\left( \dfrac{o_{ab}(\a,\z)}{n_{ab}(\z)} \right) \\
& + \sum\limits_{a \leq b}{(n_{ab}(\z)-o_{ab}(\a,\z))}\log\left(1- \dfrac{o_{ab}(\a,\z)}{n_{ab}(\z)} \right)  \\
& -\log C_{\text{MN}}(n,k) - \sum\limits_{a \leq b}\log C_{\text{MN}}(n_{ab}(\z),2).
\end{split}
\end{equation}
 
The pseudo-code of the algorithm to estimate the number of communities using the DNML estimator is given in Algorithm \ref{algorithm}.\\

\begin{algorithm}[H] \label{algorithm}
\caption{Model Selection Algorithm using the Penalized DNML Criterion}

\textbf{Input:} adjacency matrix $\mathbf{A}_{n \times n}$, maximum number of communities $K_{\max}$.

\begin{enumerate}
\item For each candidate $1 \le k \le K_{\max}$:
    \begin{enumerate}
        \item[(1.1)] \textbf{(Label assignment)} compute the community labels 
        $\hat{\mathbf z}_n$ with $k$ groups in \eqref{eq:estimate_z}.
        
        \item[(1.2)] \textbf{(Parameter estimation)} estimate the SBM parameters 
        $\hat{\theta} = (\hat{\pi}, \hat{P})$ from $\hat{\mathbf z}_n$.
        
        \item[(1.3)] \textbf{(Counts)} compute the counts 
        $n_a$ \eqref{eq:count1}, $n_{ab}$ \eqref{eq:count2} and $o_{ab}$ \eqref{eq:count3} induced by $\hat{\theta}$.
        
        \item[(1.4)] \textbf{(Criterion evaluation)} compute the $\DNML_k(\a, \z)$ in \eqref{eq:DNML_final} and the $\pDNML(k,n)$ in \eqref{eq:pen_DNML}.
    \end{enumerate}

\item Compute $\hk(\A)$ in \eqref{eq:est_DNML}.
\end{enumerate}

\textbf{Output:} the estimated number of communities $\hk$.
\end{algorithm}

\vspace{0.5cm}

Taking $K_{\max}=n$, the penalized estimator has cubic computational complexity, as the DNML criterion is evaluated for each $k = 1,\dots,n$, with a cost of $O(n^2)$ per evaluation. This computational cost does not take into account the computation of $\hz$ at each iteration, which in practice can be obtained using any community detection method. If the algorithm used to estimate $\hz$ has complexity at most $O(n^2)$, then the total computational cost is of order $O(n^3)$. Otherwise, if the community detection algorithm has complexity $O(w)$ with $w > n^2$, the overall complexity of the penalized DNML algorithm becomes $O(w \cdot n)$. It is important to emphasize that if the algorithm is evaluated for $k$ varying up to a fixed $K_{\max}$, as described in Algorithm \ref{algorithm} and commonly done in the literature, the total computational cost of $O(n^2)$.

To study the computational cost, we compute it using the Spectral Clustering \citep{Lei_SC_2015} and  Fast-Greedy \citep{clauset2004finding} algorithms for community detection. In Figure \ref{fig:tempot}, we observe that the execution time excluding the community detection step (red line) is well fitted by a third-degree polynomial. Moreover, the curves corresponding to the total computation time of the penalized estimator (green line) differ because the community detection methods have different execution times. In fact, the total execution time of the algorithm is higher when using Spectral Clustering, since this community detection method has computational complexity $O(n^3)$. In contrast, the Fast Greedy method exhibits approximately $O(n \log^2 n)$ complexity, making it computationally less demanding.

\begin{figure}[H]
    \centering
    \begin{subfigure}{0.49\textwidth}
        \centering
        \includegraphics[width=\linewidth]{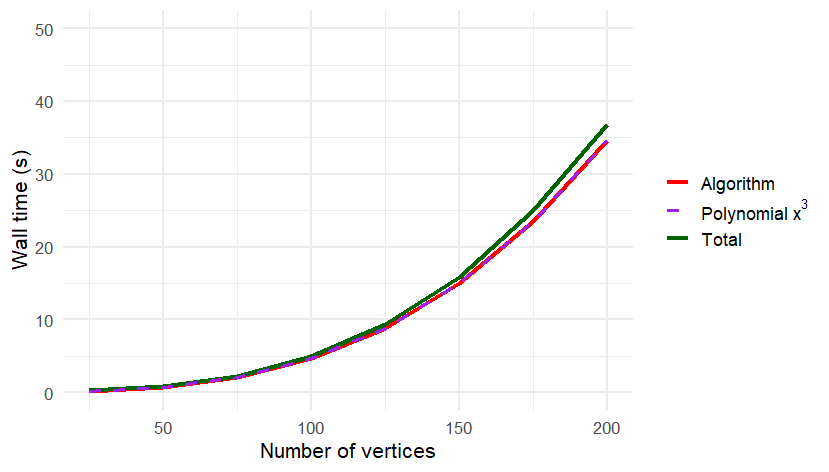}
        \caption{\textit{Fast-Greedy}}
    \end{subfigure}
    \hfill
    \begin{subfigure}{0.49\textwidth}
        \centering
        \includegraphics[width=\linewidth]{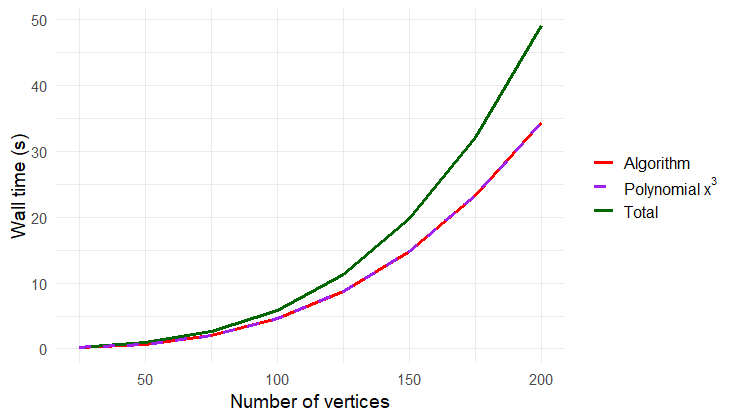}
        \caption{\textit{Spectral Cluster}}
    \end{subfigure}
    \caption{Comparison of the average total execution time in seconds of the penalized estimator excluding the estimation time of $\hz$ (red line) and including the estimation time of $\hz$ (green line), using the Fast-Greedy and Spectral Clustering algorithms.}
    \label{fig:tempot}
\end{figure}

The variational approach proposed by \cite{daudin2008mixture} for estimating the communities in \eqref{eq:estimate_z} becomes increasingly computationally demanding as $n$ grows. In fact, even though we run Algorithm \ref{algorithm} with a fixed $K_{\max}=10$, Table~\ref{tab:time_table} shows that the total running time of the algorithm is approximately twice the time required to run spectral clustering with $K_{\max}=n$. 
However, the runtime for computing the DNML remains low.

\begin{table}[H]
\centering
\caption{Wall time (in seconds) of Algorithm~\ref{algorithm} when $\hz$ is estimated using the variational algorithm. }
\setlength{\tabcolsep}{4pt}
\begin{tabular}{c|c|c|c|c|c|c|c|c}
\hline
$n$ & {25} & {50} & {75} & {100} & {125} & {150} & {175} & {200} \\ \hline
Wall time Total (s)& 9.016 & 10.404 & 12.542 & 23.237 & 33.374 & 52.003 & 79.184 & 103.090 \\ \hline
Wall time DNML & 0.035 & 0.047 & 0.068 & 0.104 & 0.171 & 0.235 & 0.327 & 0.420 \\ \hline
\end{tabular}
\label{tab:time_table}
\end{table}

\section{Empirical Analysis}\label{sec:simulations}

In this section, we study the performance of the penalized DNML estimator on synthetic data. We compare the proposed penalized DNML estimator with other approaches based on different principles. These include methods based on cross-validation over vertices (NCV) \citep{Chen2028_NCV} and over edges (ECV) \citep{li2020network}, as well as approaches based on spectral properties of graphs, such as hypothesis testing (GFit) \citep{Lei2016_test} and the Bethe Hessian matrix (BHMC) \citep{le2022estimating}. In addition, we consider likelihood-based methods, including penalized maximum likelihood (PML) \citep{wang2017likelihood}, penalized profile likelihood, also referred to as corrected BIC (CBIC) \citep{hu2020corrected}, and integrated likelihood (IL) \citep{cerqueira2020}. Both the PML and CBIC approaches involve tuning parameters, which are set to 0.04 and 1, respectively. For the DNML we used the penalty function \eqref{eq:pen_DNML} with $\epsilon=0.5$.

The community detection method used to obtain the plug-in estimates for CBIC and DNML is spectral clustering \citep{Lei_SC_2015}. We opted to use spectral clustering instead of the VEM algorithm due to its computational cost, as discussed in Section \ref{sec:computational}. In fact, both methods performed well in estimating the communities, without affecting the performance of the estimators for the number of communities.

In this study, we focus on three scenarios to evaluate the performance of the proposed estimator in comparison with the others. First, it is of interest to analyze how the methods perform as the number of vertices $n$ increases. Next, fixing the number of vertices, we study how varying the absolute difference between within-community and between-community edge probabilities affects the methods’ ability to distinguish communities, particularly when these probabilities are nearly equal. Finally, we consider a sparsity scenario to evaluate the performance of the proposed estimator under a sparsity regime.

For each scenario and model setting, 100 networks are generated from the SBM model with fixed $k_0$. For each model selection method, the average estimated value of $k_0$ is used to evaluate the performance of the algorithms. To control the simulation parameters, we fix the within-community edge probability at $a$ and the between-community edge probability at $b$.

 In the first scenario, we consider a within-community edge probability of $a = 0.8$ and a between-community edge probability of $b = 0.3$. The true number of communities is fixed at $k_0 = 5$ and we vary the number of vertices. We consider two settings within this scenario: a balanced network ($\pi=(1/5,1/5,1/5,1/5,1/5)$) and an unbalanced network ($\pi=(0.6, 0.1, 0.1, 0.1, 0.1)$). Figure \ref{fig:sim1} shows that the mean estimated number of communities obtained by the penalized DNML estimator approaches the true number of communities as the number of vertices $n$ increases. For small values of $n$, the proposed method tends to underestimate the true number of communities. This behavior may be explained by the linear dependence of the penalty term on $n$. When $n$ is small, the penalty can dominate the gain in likelihood obtained by increasing the number of communities, leading the criterion to favor simpler models. However, for unbalanced networks, the penalized DNML converges to the true number of communities faster than the others methods.

\begin{figure}[H]
    \centering
    \begin{subfigure}{0.49\textwidth}
        \centering
        \includegraphics[width=\linewidth]{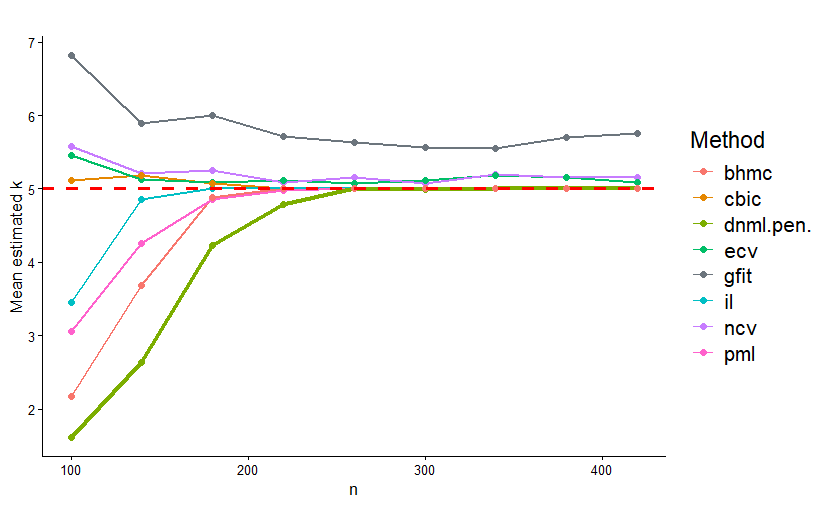}
        \caption{Balanced Network}
    \end{subfigure}
    \hfill
    \begin{subfigure}{0.49\textwidth}
        \centering
        \includegraphics[width=\linewidth]{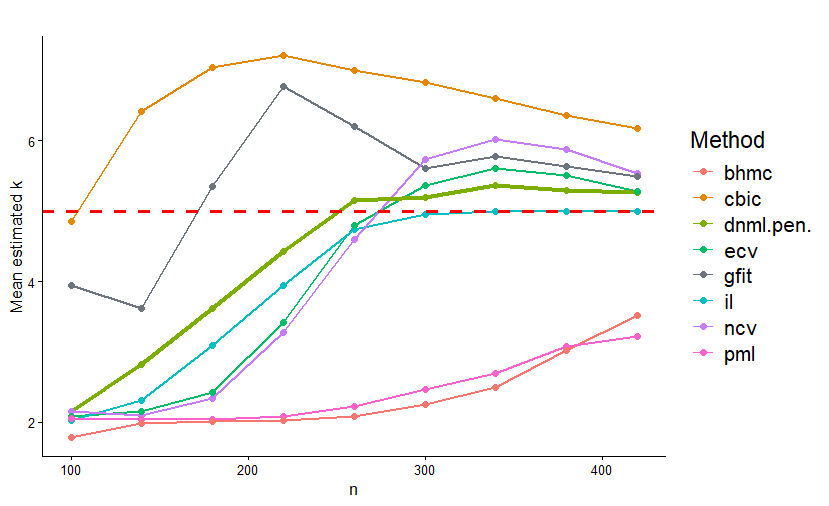}
        \caption{Unbalanced network}
    \end{subfigure}
    \caption{Average estimated number of communities for each method as the number of vertices $n$ varies, with $a = 0.8$, $b = 0.3$, and $k_0 = 5$, under balanced and unbalanced network settings.}
    \label{fig:sim1}
\end{figure}

 Next, we fix the number of vertices in the network at $n = 200$ and then vary the between-community edge probabilities. For the balanced setting, the edge probability within communities is set to $a = 0.9$, and for the unbalanced setting ($\pi = (0.6, 0.1, 0.1, 0.1, 0.1)$), it is set to $a = 0.8$. By Figure \ref{fig:sim2}, as expected, as the absolute difference between the edge probabilities within and between communities decreases, i.e., as the value of $b$ increases, the performance of all estimators decreases. In the balanced setting, the penalized DNML deviates from the true number of communities, $k_0 = 5$, faster than the competing methods as $b$ approaches $a$. In contrast, in the unbalanced setting, the penalized DNML performs better, with the mean estimated number of communities remaining closer to the true value even for larger values of $b$.

\begin{figure}[H]
    \centering
    \begin{subfigure}{0.49\textwidth}
        \centering
        \includegraphics[width=\linewidth]{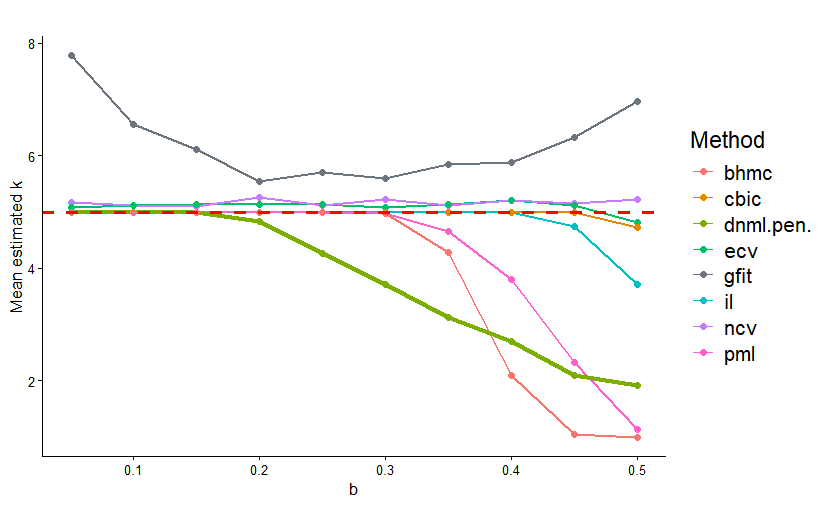}
        \caption{Balanced Network}
    \end{subfigure}
    \hfill
    \begin{subfigure}{0.49\textwidth}
        \centering
        \includegraphics[width=\linewidth]{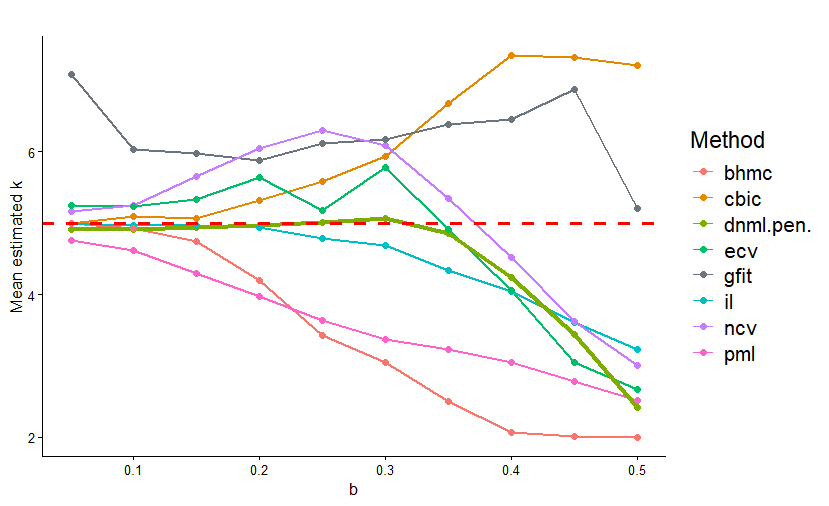}
        \caption{Unbalanced network}
    \end{subfigure}
    \caption{Mean estimated $k$ by the methods varying the probability $b$ of the network fixing $n = 200$, $k_0 = 5$, $a=0.8$ for balanced and $a=0.9$ unbalanced networks.}
    \label{fig:sim2}
\end{figure}

In order to study the impact of sparsity on the performance of the algorithm, we consider a network with $k_0=5$, $n=300$, $a=5$, and $b=1$. Figure~\ref{fig:sim_sparsity} shows that the DNML algorithm performs better than the other algorithms in the case of unbalanced communities ($\pi = (0.6, 0.1, 0.1, 0.1, 0.1)$). The IL method is the only one that outperforms DNML for unbalanced networks, which may be due to the fact that the penalty term used in IL is of smaller order.

\begin{figure}[H]
    \centering
    \begin{subfigure}{0.49\textwidth}
        \centering
        \includegraphics[width=\linewidth]{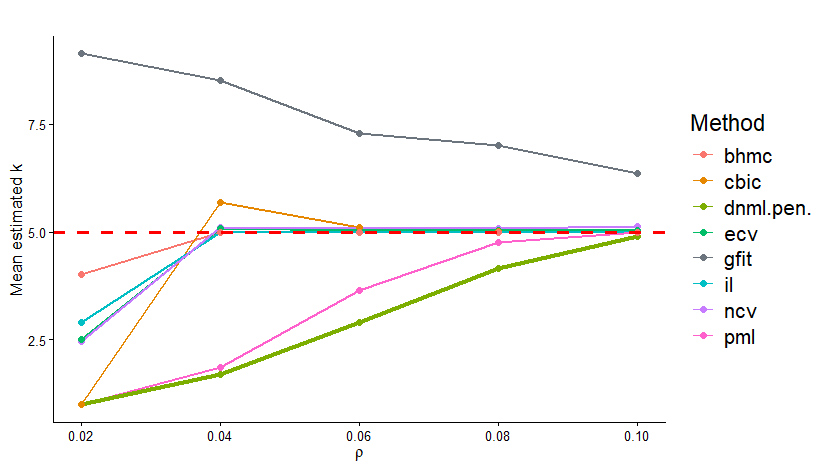}
        \caption{Balanced Network}
    \end{subfigure}
    \hfill
    \begin{subfigure}{0.49\textwidth}
        \centering
        \includegraphics[width=\linewidth]{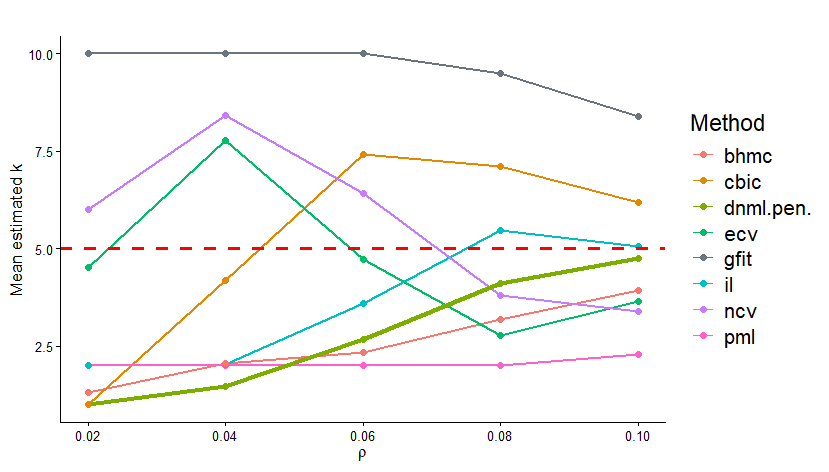}
        \caption{Unbalanced Network}
    \end{subfigure}
    \caption{Mean estimated $k$ by the methods varying $\rho_n$ fixing $a = 5$, $b=1$ and $k_0 = 5$.}
    \label{fig:sim_sparsity}
\end{figure}

Finally, we compare the performance of the different versions of the DNML estimator under distinct conditions. Specifically, we consider: (i) the DNML defined in \eqref{def:DNML} without the penalty term; (ii) the penalized DNML defined in \eqref{eq:est_DNML}, with the penalty term given in \eqref{eq:pen_DNML}; and (iii) a modified version of the penalized DNML that excludes the $n \log((k-1)!)$ term from the penalty function, which we refer to as the penalized log-DNML. First, we vary the number of vertices in the network $n$ for a balanced network with $a = 0.8$ and $b = 0.3$. Next, we consider a fixed number of vertices, $n = 120$, and $a = 0.9$, and we vary the between-community edge probability $b$. As shown in Figure \ref{fig:sim4}, the penalized DNML eventually approaches the true number of communities in both scenarios. However, this method tends to underestimate the true number of communities, in contrast to the DNML estimator without the penaty term, which tends to overestimate it. The penalized log-DNML, however, yields estimates of the number of communities that are closer to the true value, $k_0 = 5$, than the other two DNML-based estimators. It converges more rapidly to the true value in the first scenario and deviates more slowly from the true value in the second scenario. This is expected because the necessity of the extra term in the penalty function of the DNML estimator arises from Proposition \ref{prop:non_overestimation_DNML}, which guarantees that the DNML estimator does not overestimate the true number of communities.

\begin{figure}[H]
    \centering
    \begin{subfigure}{0.49\textwidth}
        \centering
        \includegraphics[width=\linewidth]{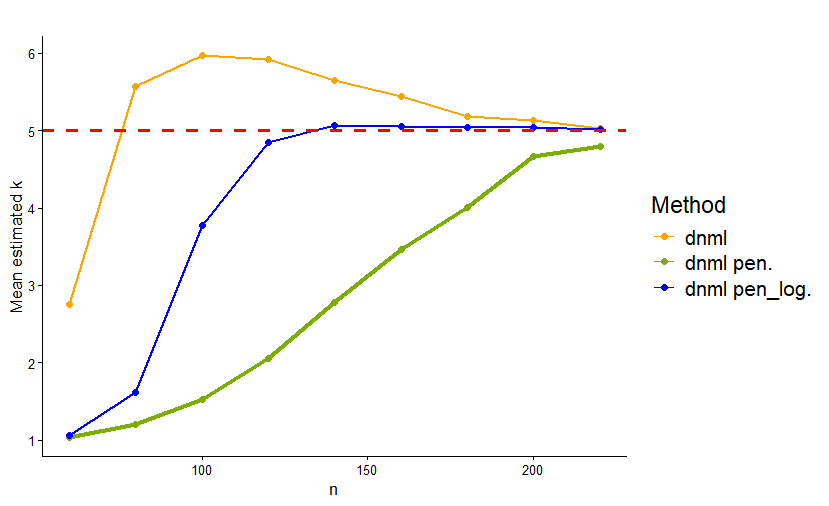}
        \caption{$a = 0.8$, $b = 0.3$}
    \end{subfigure}
    \hfill
    \begin{subfigure}{0.49\textwidth}
        \centering
        \includegraphics[width=\linewidth]{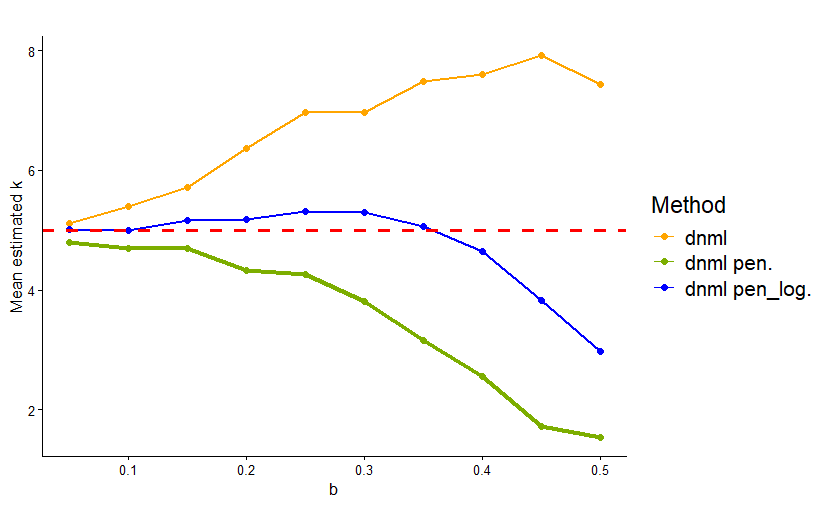}
        \caption{$n = 120$, and $a = 0.9$}
    \end{subfigure}
    \caption{Comparison of the different DNML-based estimators, varying the number of nodes $n$ (left) and varying the between-community edge probability $b$ (right) for balanced networks. Results are averaged over 100 replications.}
    \label{fig:sim4}
\end{figure}

\section{Applications}\label{sec:applications}

In this section, we investigate the performance of the proposed method in estimating the true number of communities, $k_0$, in real-world network data. We consider datasets that are commonly used in the model selection literature for random networks. These datasets serve as benchmark examples for evaluating the performance of the proposed methods, as they are widely studied and well established in the field.

The first dataset contains information on political books sold on Amazon in 2004, where an edge connects two books if they share the same purchasing trends. \citep{newman2006finding}. The books are classified into three categories: neutral, liberal, and conservative, corresponding to three communities in the network. The second dataset corresponds to a dolphin network in Doubtful Sound, New Zealand. An edge connects two dolphins if a social relationship between them is observed \citep{lusseau2003bottlenose}. The network contains two groups of dolphins, resulting in a ground-truth partition with two communities. The third dataset contains a friendship network in a karate club \citep{zachary1977information}, consisting of 34 members organized into two communities. The fourth dataset is a college football network, where each node represents a team and an edge indicates that the teams played a game against each other during the 2000 season. The network includes 115 teams divided into 12 conferences, which constitute the ground-truth communities \citep{girvan2002community}. In addition to this network, since it contains several communities with a small number of vertices, we artificially increased the number of vertices based on the SBM fitted to this network, up to 1150 vertices. This network was constructed by first estimating the community assignments using Spectral Clustering and then estimating the SBM parameters via maximum likelihood. A new network with 1,150 nodes was subsequently generated from the SBM using the estimated parameters. Finally, the last dataset is a network of 1222 political blogs divided into two groups: liberal and conservative \citep{adamic2005political}.

Considering all real-data applications jointly, the penalized DNML estimator proposed in this work demonstrates overall satisfactory performance in estimating the number of communities. Its results remain stable across networks with different structural characteristics, as reported in Table \ref{tab:resumo}. The DNML penalized method estimates the correct number of communities for the Dolphins, Blogs and Football (1150). In the college football dataset, a limitation of the DNML method becomes evident when the network contains a relatively large number of communities relative to the number of vertices. In this setting, the estimator struggles to accurately identify the true number of groups. However, when the same structural pattern is preserved and the number of vertices is synthetically increased through an SBM (Football (1150)), the estimator is able to recover the true number of communities exactly. In the political blogs dataset, all other estimators produced results that deviated noticeably from the ground truth, whereas the penalized DNML estimator recovered the exact number of communities. For cases in which the estimator failed to recover the true number of communities, it tended to underestimate the actual number. This behavior is likely due to the penalization term, which favors simpler models and may lead to selecting fewer communities than present in the data.

\begin{table}[H] 
\centering 
\caption{Estimated number of communities ($k$) obtained by each method for the different benchmark datasets.}
\resizebox{\textwidth}{!}{%
\begin{tabular}{lccccccccc} 
\toprule
\textbf{Method} & Truth & DNML Pen & NCV & ECV & PML & GFit & BHMC & IL & CBIC\\
\midrule
Books & 3 & 2  & 11 & 11 & 3 & 12 & 4 & 3 & 4\\
Dolphins & 2& 2  & 3 & 5 & 2 & 7 & 2 & 1 & 2\\
Karate & 2& 1 & 2 & 9 & 1 & 6 & 2 & 1 & 1\\
Football &  12 & 3  & 11 & 11 & 3 & 12 & 10 & 4 & 11\\
Football (1150) & 12 & 12 & 14 & 12 & 11 & - & 11 & 10 & 12\\
Blogs & 2& 2 & 20 & 20 & 4 & - & 8  & 10 & 1\\
\bottomrule
\end{tabular}%
}
\label{tab:resumo}
\end{table}

\section{Discussion}

In this work, we have studied penalized estimators for determining the number of communities in SBM, with a focus on approaches based on normalized likelihoods. Our theoretical results demonstrate that the proposed NML, NMCL, and DNML estimators are strongly consistent under a general sparsity regime, which includes the challenging setting where the average degree grows slowly with the network size. This extends previous consistency results for likelihood-based estimators and establishes strong guarantees for estimators that are computationally feasible in practice, such as the DNML. 

Empirical evaluations on both simulated and benchmark real-world networks indicate that the DNML estimator performs competitively with existing methods and often provides superior accuracy in unbalanced networks. However, the simulations also indicate that the DNML penalized estimator tends to underestimate the true number of communities, particularly when compared with the unpenalized DNML and the DNML with a logarithmic ($\log n$) penalty. In fact, our results guarantee that DNML and NMCL do not underestimate the true number of communities when the penalty function is of order $\log n$. However, proving non-overestimation for such a penalty, or for penalties of smaller order, remains challenging. Establishing theoretical consistency for these variants therefore remains an open problem and a direction for future research.

\section{Acknowledgments}
This research was supported by the São Paulo Research Foundation (FAPESP) grant 2023/05857-9 and 2025/08655-3 to A.C. and grant 2025/04228-3 to F.B.. This work was also supported by CNPq project (grant 441884/2023-7) and  by  São Paulo Research Foundation  (FAPESP) grant 2023/13453-5.

\bibliographystyle{chicago}

\bibliography{references}

\appendix

\newpage
\section{Proof of auxiliary results}

The key result for proving the consistency of the proposed estimators is to establish an upper bound for the normalizing constants $\CNML$, $\CNMCL$, $\CDMNL$ and $\CDMNLz$ that is of order $\log n$. Although the normalizing term $\CDMNL$ depends on the community assignments $\z$, the obtained upper bound is uniform over all possible community assignments. It is worth noting that this bound is closely related to a BIC-type penalty, which accounts for the number of free parameters multiplied by the logarithm of the sample size. Indeed, the probability vector $\pi$ involves $k-1$ free parameters associated with the $k$ communities of the $n$ nodes, while the symmetric connectivity matrix $P$ has $k(k+1)/2$ free parameters and generates a network of size $n^2$.

\begin{proposition}\label{prop:razao_L_KT}
For $k \geq 1$ and for all $\z \in [k]^n$, it holds that
\begin{equation}
    0 \leq \log\CDMNL(\z,k) + \log\CDMNLz(n,k)  \leq  \left(\frac{k(k+2)-1}{2} \right) \log n + c_k
\end{equation}
where $c_k$ is constant depending only on $k$.
Moreover, 
\begin{equation}
    0 \leq \log\CNML(n,k)  \leq \left(\frac{k(k+2)-1}{2} \right)\log n + d_k
\end{equation}
and
\begin{equation}
    0 \leq \log\CNMCL(n,k)  \leq \left(\frac{k(k+2)-1}{2} \right)\log n + d_k
\end{equation}
where $d_k$ is constant depending only on $k$.
\end{proposition}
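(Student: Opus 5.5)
Lower bounds come first. Each normalizing constant is a sum over all data configurations of a maximized likelihood. For every fixed parameter value the likelihood sums to one over those configurations, so the sum of suprema is at least one. This gives $\log\CDMNL(\z,k)\ge 0$, $\log\CDMNLz(n,k)\ge0$, $\log\CNML\ge0$ and $\log\CNMCL\ge0$. All the work is in the upper bounds, and I would derive every one of them from the DNML bound.

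\textbf{Step 1 (DNML bound).} By Proposition~\ref{prop:aprox_C} and \eqref{eq:relation_CMN_z},
\[
\log\CDMNL(\z,k)+\log\CDMNLz(n,k)=\sum_{a\le b}\log C_{\text{MN}}(n_{ab}(\z),2)+\log C_{\text{MN}}(n,k).
\]
I would use the standard Krichevsky--Trofimov / Shtarkov bound for the multinomial: comparing the maximized likelihood with the Dirichlet$(1/2,\dots,1/2)$ mixture gives, for all $m\ge1$ and $Q$ categories,
\[
C_{\text{MN}}(m,Q)\le c'_Q\, m^{(Q-1)/2}.
\]
Explicitly, the ratio of the maximum likelihood to the KT probability is at most $\Gamma(Q/2)\sqrt{\pi}^{\,-Q}\,\Gamma(m+Q/2)/\Gamma(m+1/2)$ up to constants, which is $O(m^{(Q-1)/2})$. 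For $m=0$ the constant is $1$.

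With $Q=2$ and $n_{ab}\le n^2$, each pair $a\le b$ contributes at most $\log n+c$, and there are $k(k+1)/2$ such pairs. With $Q=k$ and $m=n$, the multinomial term contributes $\frac{k-1}{2}\log n+c$. Adding these gives
\[
\frac{k(k+1)}{2}+\frac{k-1}{2}=\frac{k^2+2k-1}{2}=\frac{k(k+2)-1}{2},
\]
as required, uniformly in $\z$. The main technical point is this uniform KT bound, which I take from the literature (e.g.\ Kontkanen, or Xie--Barron) together with Stirling's formula.

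\textbf{Step 2 (NMCL).} For each $(\x,\e)$ we have
\[
\sup_{\pi,P}\Pt(\x,\e)=\sup_P\P(\x\mid\e)\,\sup_\pi\Pp(\e),
\]
since the parameters separate. Hence
\[
\CNMCL=\sum_\e \CDMNL(\e,k)\sup_\pi\Pp(\e)\le\Big(\max_\e\CDMNL(\e,k)\Big)\,\CDMNLz(n,k).
\]
Applying Step 1 gives the bound with $d_k=c_k$.

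\textbf{Step 3 (NML).} For any $(\pi,P)$, $\Pt(\x)=\sum_\e\Pt(\x,\e)$, so
\[
\sup_{\pi,P}\Pt(\x)\le\sum_\e\sup_{\pi,P}\Pt(\x,\e).
\]
Summing over $\x$ gives $\CNML\le\CNMCL$, and Step 2 then finishes the proof.

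The only real obstacle is Step 1's multinomial bound. Its constant must be uniform in $m$, and it must cover the degenerate cases $n_{ab}=0$ or $1$ (where $C_{\text{MN}}=1$ or $2$). It also relies on $n_{ab}\le n^2$ to convert $\tfrac12\log n_{ab}$ into $\log n$.
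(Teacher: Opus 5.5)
Your proof is correct. Step 1 is the paper's argument in modular form. The paper compares the maximized likelihoods directly with the Beta$(1/2,1/2)$ and Dirichlet$(1/2,\dots,1/2)$ (Krichevsky--Trofimov) mixtures, using Lemma 5.1 of \cite{Cerqueira24_DCSBM} and Stirling's bounds. That amounts to proving the uniform estimate $C_{\text{MN}}(m,Q)\le c'_Q\,m^{(Q-1)/2}$, which you instead cite after factoring $\CDMNL$ through Proposition~\ref{prop:aprox_C}. Your stated prefactor is slightly off: the bound is $\Gamma(1/2)\Gamma(m+Q/2)/(\Gamma(Q/2)\Gamma(m+1/2))$. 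Only the $m$-dependence matters, so this is harmless.

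The genuine difference is in Steps 2--3. The paper re-runs the mixture comparison for $\CNML$ (against $\sum_{\z}\Q{\a\mid\z}\Q{\z}$) and for $\CNMCL$ (against $\Q{\a\mid\z}\Q{\z}$). You instead use separability of $(\pi,P)$ to get $\CNMCL(n,k)=\sum_{\e}\CDMNL(\e,k)\sup_{\pi}\Pp(\e)\le\max_{\e}\CDMNL(\e,k)\,\CDMNLz(n,k)$, and $\sup\sum\le\sum\sup$ to get $\CNML(n,k)\le\CNMCL(n,k)$. Your chain needs only one analytic estimate, gives $d_k=c_k$ explicitly, and makes the ordering of the three normalizers visible. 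The paper's $\CNML$ step in fact uses the same $\sup\sum\le\sum\sup$ inequality implicitly. In exchange, the paper's route is self-contained. Because it works with pointwise ratio bounds against a probability distribution, it also does not need the factorization of Proposition~\ref{prop:aprox_C}.

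Your care with $n_{ab}(\z)\in\{0,1\}$ is warranted. The paper asserts $1\le n_{ab}(\z)$ for all $\z$, which fails when some community has at most one node. Those factors contribute nothing, so the paper's conclusion is unaffected.
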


\begin{proof}

For simplicity, we write $o_{ab}(\a,\z)=o_{ab}$, $n_{ab}(\z)=n_{ab}$ and $n_{a}(\z)=n_{a}$. Using that the conditional maximum likelihood estimator of $P_{ab}$ is given by  $o_{ab}/n_{ab}$, we write
\begin{equation}\label{eq:max_likelihood}
    \sup_{P \in \mathcal{P}^{k}} \P(\a\mid \z)  = \prod\limits_{a \leq b}  \left( \dfrac{o_{ab}}{n_{ab}} \right)^{o_{ab}}\left(1- \dfrac{o_{ab}}{n_{ab}} \right)^{n_{ab}-o_{ab}}.
\end{equation}
We also have that the conditional maximum likelihood estimator of $\pi_{a}$ is given by  $n_{a}/n$, and
\begin{equation}\label{eq:max_likelihood_z}
    \sup_{\pi \in \Pi^{k}} \Pp( \z)  = \prod\limits_{a=1}^k  \left( \frac{n_a}{n}\right)^{n_a}.
\end{equation}

To obtain an upper bound for the normalizing constants $\CNML$, $\CNMCL$, $\CDMNL$, and $\CDMNLz$, we follow the approach of Proposition 1 in \cite{cerqueira2020}. The central idea is to establish an upper bound for the ratio between the likelihood functions and their integrated forms, namely the marginal likelihoods obtained by integrating out the model parameters. 

We begin the analysis with the normalizing constant $\CDMNL$. In order to establish the result, the conditional maximum likelihood function will be compared with the integrated conditional likelihood. To define the integrated conditional likelihood, we define a prior distribution $\nu(P)$ over $[0,1]^{k\times k}$ by defining beta prior for $P_{ab}$ such that
$$P_{ab} \overset{\text{i.i.d.}}{\sim} \text{Beta}(1/2,1/2), \quad 1\leq a \leq b \leq k\,.$$

We define the integrated conditional likelihood, for $\z\in [k]^n$, by
\begin{equation}\label{eq:kt}
    \Q{\a\mid \z} = \mathbb E_{\nu}\left[\,\P(\a\mid \z)\,\right]
\end{equation}
where the expectation $E_{\nu}$ is with respect the distribution $\nu(P)$.
Using the conjugacy between the Bernoulli and Beta distributions, we have that
\begin{equation}\label{K_a_z}
\begin{split}
\Q{\a|\z}&=\int_{[0,1]^{\frac{k(k+1)}{2}}} \prod\limits_{a \leq b} \frac{1}{\Gamma(\frac{1}{2})^2} P_{ab}^{o_{ab}-\sfrac{1}{2}}(1-P_{ab})^{n_{ab}-o_{ab}-\sfrac{1}{2}} dP\\
&=\dfrac{1}{\Gamma\left(\frac{1}{2}\right)^{k(k+1)}}\prod\limits_{a \leq b} \dfrac{\Gamma\left(o_{ab}+\frac{1}{2}\right)\Gamma\left(n_{ab}-o_{ab}+\frac{1}{2}\right)}{\Gamma\left(n_{ab}+1\right)}\,.
\end{split}
\end{equation}

Combining \eqref{eq:max_likelihood} and \eqref{K_a_z}
\begin{equation}\label{razao_a|z}
\begin{split}
\dfrac{\sup_{P \in \mathcal{P}^{k}}\P(\a|\z)}{\Q{\a|\z}}  
= \Gamma\left(\dfrac{1}{2}\right)^{k(k+1)}\prod\limits_{a \leq b}  \dfrac{\left( \dfrac{o_{ab}}{n_{ab}} \right)^{o_{ab}}\left(1- \dfrac{o_{ab}}{n_{ab}} \right)^{n_{ab}-o_{ab}}}{\Gamma\left(o_{ab}+\frac{1}{2}\right)\Gamma\left(n_{ab}-o_{ab}+\frac{1}{2}\right)}\Gamma\left(n_{ab}+1\right)
\end{split}
\end{equation}

For each pair $a$ and $b$, $1\leq a \leq b \leq k$, we use Lemma 5.1 in \cite{Cerqueira24_DCSBM} to have that\\
\begin{equation}\label{eq:lemma_Ol}
 \dfrac{\left( \dfrac{o_{ab}}{n_{ab}} \right)^{o_{ab}}\left(1-\dfrac{o_{ab}}{n_{ab}} \right)^{n_{ab}-o_{ab}}}{\Gamma\left(o_{ab}+\frac{1}{2}\right)\Gamma\left(n_{ab}-o_{ab}+\frac{1}{2}\right)} \leq \dfrac{1}{\Gamma\left(n_{ab}+\frac{1}{2}\right)\, \Gamma\left( \frac{1}{2}\right)},
\end{equation}
using the fact that the sum of the exponents is equal to  $n_{ab}$.

Using  \eqref{eq:lemma_Ol} in \eqref{razao_a|z} we obtain
\begin{equation}\label{razao_final}
\begin{split}
\log \dfrac{\sup_{P \in \mathcal{P}^{k}}\P(\a\mid \z)}{\Q{\a\mid \z}} &\leq \frac{k(k+1)}{2}\log\Gamma(1/2) +  \sum\limits_{a\leq b}\log \left(  \dfrac{\Gamma\left(n_{ab}+1\right)}{\Gamma\left(n_{ab}+\frac{1}{2}\right) } \right)\,.
\end{split}
\end{equation}

In order to bound the sum on the right-hand side of \eqref{razao_final}, we use a bound that generalizes Stirling's approximation to the factorial \citep{whittaker2020course}[pag. 253]. For $x > 1$ we have that
\begin{equation}\label{eq:bound_gamma}
x^{x-\frac{1}{2}}e^{-x}\sqrt{2\pi} \leq \Gamma(x) \leq x^{x-\frac{1}{2}}e^{-x}\sqrt{2\pi}e^{\frac{1}{12x}}\,.
\end{equation}

Using \eqref{eq:bound_gamma} we have that
\begin{equation}\label{log_gamma_nlm}
\begin{split}
\log \left(  \dfrac{\Gamma\left(n_{ab}+1\right)}{\Gamma\left(n_{ab}+\frac{1}{2}\right) } \right) &\leq \frac{1}{2}\log n_{ab} +\frac{1}{2n_{ab}}+  \frac{1}{12 n_{ab}} + \frac{1}{2} \\
&\leq \log n + 2 \\
\end{split}
\end{equation}
where the last inequality follows from the fact that $1\leq n_{ab}(\z) \leq n^2$, for all $\z \in [k]^n$.

Combining \eqref{razao_final} and \eqref{log_gamma_nlm}, we have for all $\a \in \{0,1\}^{n \times n}$ and $\z \in [k]^n$ that
\begin{equation}
\begin{split}\label{eq:ratio_P_KT}
 \log \dfrac{\sup_{P \in \mathcal{P}^{k}}\P(\a\mid \z)}{\Q{\a\mid \z}} &\leq \frac{k(k+1)}{2} \log n + c'_k, 
\end{split}
\end{equation}
where $c'_k$ is a constant depending only on $k$.

Using the fact that $\Q{\a\mid \z}$ is a distribution over $\a\in \{0,1\}^{n\times n}$, for all $\z \in [k]^n$, we have that
\begin{equation}
\begin{split}
 \CDMNL(\z,k) &= \sum\limits_{\x}\sup\limits_{P \in \mathcal{P}^{k}}\P(\x\mid \z)\\
 &\leq \sum\limits_{\x}\Q{\x\mid \z}\max\limits_{\a}\dfrac{ \sup_{P \in \mathcal{P}^{k}} \P(\a\mid \z) }{\Q{\a\mid \z}}\\
 &\leq \max\limits_{\a}\dfrac{ \sup_{P \in \mathcal{P}^{k}} \P(\a\mid \z) }{\Q{\a\mid \z}}.
\end{split}
\end{equation}
Thus, the upper bound for $\log\CDMNL(\z,k)$ is given by
\begin{equation}
    \log\CDMNL(\z,k) \leq \max\limits_{\a}\log\dfrac{ \sup_{P \in \mathcal{P}^{k}} \P(\a\mid \z) }{\Q{\a\mid \z}} \leq \frac{k(k+1)}{2} \log n +  c'_k.
\end{equation}
To prove the lower bound for $\log\CDMNL(\z,k)$ is enough to observe that
\begin{equation}
\begin{split}
 \CDMNL(\z,k) &= \sum\limits_{\x}\sup\limits_{P \in \mathcal{P}^{k}}\P(\x\mid \z)\geq  \sup\limits_{P \in \mathcal{P}^{k}}\sum\limits_{\x}\P(\x\mid \z) =1.
\end{split}
\end{equation}

Thus, we conclude that
\begin{equation}\label{eq:bound_CDMNL}
    0 \leq \log  \CDMNL(\z,k) \leq \frac{k(k+1)}{2} \log n + c'_k.
\end{equation}

To prove the result for $\CDMNLz$, we proceed analogously, now introducing a prior distribution $\mu(\pi)$ on $[0,1]^k$, given by a Dirichlet$(1/2,1/2,\dots,1/2)$ prior for $\pi$. We then define the corresponding integrated likelihood by
\begin{equation}\label{eq:kt_z}
    \Q{ \z} = \mathbb E_{\mu}\left[\,\Pp(\z)\,\right].
\end{equation}
Using the conjugacy between the Dirichlet and Multinomial distributions, we have that
\begin{equation}\label{K_z}
\begin{split}
\Q{\z}&=\frac{\Gamma(\frac{k}{2})}{\Gamma(\frac{1}{2})^k}\frac{\prod\limits_{a=1}^k \Gamma(n_a+\frac{1}{2})}{ \Gamma(n+\frac{k}{2})}\,.
\end{split}
\end{equation}
Combining \eqref{eq:max_likelihood_z} and \eqref{K_z} we obtain
\begin{equation}\label{razao_z}
\begin{split}
\dfrac{\sup_{\pi \in \Pi^{k}}\Pp(\z)}{\Q{\z}}  
=  \frac{\Gamma(n+\frac{k}{2})\Gamma(\frac{1}{2})^k}{\Gamma(\frac{k}{2})}\prod_{a=1}^k \frac{\left( \frac{n_a}{n}\right)^{n_a}}{\Gamma(n_a+\frac{1}{2})}.
\end{split}
\end{equation}
Using the fact that $n_1 + n_2 + \dots + n_k = n$, we apply Lemma~5.1 in \cite{Cerqueira24_DCSBM} once again to obtain that
\begin{equation}
\prod_{a=1}^k  \frac{\left( \frac{n_a}{n}\right)^{n_a}}{\Gamma(n_a+\frac{1}{2})}\leq \frac{1}{\Gamma(\frac{1}{2})^{k-1}\Gamma(n+\frac{1}{2})}.
\end{equation}
Thus, we conclude that
\begin{equation}\label{eq:bound_gamas_z}
\dfrac{\sup_{\pi \in \Pi^{k}}\Pp(\z)}{\Q{\z}}  
\leq \frac{\Gamma(\frac{1}{2})\Gamma(n+\frac{k}{2})}{\Gamma(\frac{k}{2})\Gamma(n+\frac{1}{2})}.
\end{equation}
By \eqref{eq:bound_gamma}, we obtain that
\begin{equation}\label{eq:bound_KTz_z}
\log \left( \frac{\Gamma(\frac{1}{2})\Gamma(n+\frac{k}{2})}{\Gamma(\frac{k}{2})\Gamma(n+\frac{1}{2})} \right) \leq \left( \frac{k-1}{2}\right)\log n + c''_k,
\end{equation}
where $c''_k$ is a constant depending only on $k$.
Combining \eqref{eq:bound_gamas_z} and \eqref{eq:bound_KTz_z} we have, for all $\z\in [k]^n$, that
\begin{equation}\label{eq:ratio_P_KT_z}
\log\left(\dfrac{\sup_{\pi \in \Pi^{k}}\Pp(\z)}{\Q{\z}} \right)\leq \left( \frac{k-1}{2}\right)\log n + c''_k 
\end{equation}
Using the fact that $\Q{\z}$ is a distribution over $\z\in [k]^n$, we have that
\begin{equation}
\begin{split}
 \CDMNLz(n,k) &= \sum\limits_{\e}\sup\limits_{\pi \in \Pi^{k}}\Pp(\e) \leq \sum\limits_{\e}\Q{\e}\max\limits_{\z}\dfrac{\sup\limits_{\pi \in \Pi^{k}}\Pp(\z)}{\Q{\z}} \leq \max\limits_{\z}\dfrac{\sup\limits_{\pi \in \Pi^{k}}\Pp(\z)}{\Q{\z}}
\end{split}
\end{equation}
and, by taking the logarithm, we conclude that
\begin{equation}
\begin{split}
 \log\CDMNLz(n,k) \leq \left( \frac{k-1}{2}\right)\log n + c''_k.
\end{split}
\end{equation}
On the other hand, we have that
\begin{equation}
\begin{split}
 \CDMNLz(n,k) &=  \sum\limits_{\e}\sup\limits_{\pi \in \Pi^{k}}\Pp(\e) \geq \sup\limits_{\pi \in \Pi^{k}}\sum\limits_{\e}\Pp(\e) =1.
\end{split}
\end{equation}
Thus, we conclude that 
\begin{equation}\label{eq:bound_CDMNLz}
    0 \leq \log\CDMNLz(n,k) \leq \left( \frac{k-1}{2}\right)\log n + c''_k.
\end{equation}

Consequently, the result for $\log\CDMNL+\log\CDMNLz$ follows directly from the sum of \eqref{eq:bound_CDMNL} and \eqref{eq:bound_CDMNLz}.

To prove the result for $\CNML$, we follow the same steps as before. 
However, instead of comparing the conditional maximum likelihood with the integrated conditional likelihood, we compare the likelihood with the integrated completed likelihood, defined as
\begin{equation}\label{eq:kt2}
    \Q{\a} = \mathbb{E}_{\nu,\mu}\!\left[\,\Pt(\a)\,\right].
\end{equation}
First, observe that 
\begin{equation}
\Q{\a} = \sum\limits_{\z}\Q{\a\mid \z}\Q{\z}.
\end{equation}
Thus, we have, for all $\a\in\{0,1\}^{n\times n}$, that
\begin{equation}\label{eq:bound_kt}
\begin{split}
\log \left(\sup\limits_{\pi \in  \Pi^{k}, P\in\mathcal{P}^{k} }\Pt(\a)\right) &= \log \left(\sup\limits_{\pi \in  \Pi^{k}, P\in\mathcal{P}^{k} }\sum\limits_{\z}\Pt(\a\mid \z)\Pp(\z)\right)\\
& \leq \log \left(\sup\limits_{\pi \in  \Pi^{k}, P\in\mathcal{P}^{k} }\sum\limits_{\z}n^{\frac{k(k+2)-1}{2}}e^{c'_k+c''_k}\Q{\a\mid \z}\Q{\z}\right)\\
& = \log\left(\sup\limits_{\pi \in  \Pi^{k}, P\in\mathcal{P}^{k} }n^{\frac{k(k+2)-1}{2}}e^{c'_k+c''_k}\Q{\a}\right)\\
& = \log \Q{\a} + \left(\frac{k(k+2)-1}{2}\right)\log n + d_k
\end{split}
\end{equation}
where $d_k$ is a constant depending on $k$.
Using that $\Q{\a}$ is a distribution over $\a\in \{0,1\}^{n\times n}$ we have
\begin{equation}\label{eq:bound_kt2}
\begin{split}
 \CNML(n,k) &= \sum\limits_{\x}\sup\limits_{\pi \in  \Pi^{k}, P\in\mathcal{P}^{k} }\Pt(\x)\\
 &\leq \sum\limits_{\x}\Q{\x}\max\limits_{\a}\dfrac{ \sup_{\pi \in  \Pi^{k}, P\in\mathcal{P}^{k} } \Pt(\a) }{\Q{\a}}\\
 &\leq \max\limits_{\a}\dfrac{ \sup_{\pi \in  \Pi^{k}, P\in\mathcal{P}^{k} } \Pt(\a) }{\Q{\a}}.
\end{split}
\end{equation}
The upper bound for $\CNML(n,k)$ is obtained by combining \eqref{eq:bound_kt} and \eqref{eq:bound_kt2}. The lower bound is obtained by observing that
\begin{equation}
\begin{split}
 \CNML(n,k) &= \sum\limits_{\x}\sup\limits_{\pi \in  \Pi^{k}, P\in\mathcal{P}^{k} }\Pt(\x)\geq  \sup\limits_{\pi \in  \Pi^{k}, P\in\mathcal{P}^{k} }\sum\limits_{\x}\Pt(\x) =1.
\end{split}
\end{equation}
Finally, for $\CNMCL$ we write
\begin{equation}
\begin{split}
 \CNMCL(n,k) &= \sum_{\e}\sum\limits_{\x}\sup\limits_{\pi \in  \Pi^{k}, P\in\mathcal{P}^{k} }\Pt(\x,\e)\\
 &\leq \sum_{\e}\sum\limits_{\x}\Q{\x\mid \e}\Q{\e}\max_{\a,\z}\dfrac{ \sup_{\pi \in  \Pi^{k}, P\in\mathcal{P}^{k} } \Pt(\a,\z) }{\Q{\a\mid \z}\Q{\z}}\\
 &\leq \max\limits_{\a,\z}\dfrac{ \sup_{\pi \in  \Pi^{k}, P\in\mathcal{P}^{k} } \Pt(\a,\z) }{\Q{\a\mid \z}\Q{\z}}.
\end{split}
\end{equation}
Combining \eqref{eq:ratio_P_KT} and \eqref{eq:ratio_P_KT_z} the result follows.
\end{proof}

Propostions \ref{prop:bound_prob_overestimation_DNML} and \ref{prop:bound_prob_overestimation_NML} are useful for showing that the estimator does not overestimate the true number of communities, as it provides an upper bound on the probability that the estimator selects $k$ communities when $k > k_0$. In Proposition \ref{prop:bound_prob_overestimation_DNML}, an additional term of order $n\log k_0$ appears in the bound due to the use of the complete likelihood evaluated at the estimated community assignments. This extra term plays an important role in the form of the penalty function \eqref{eq:pen_DNML}. This term does not appear in Proposition \ref{prop:bound_prob_overestimation_NML}, as the estimator relies on the marginal likelihood rather than the likelihood evaluated at a specific community configuration.

\begin{proposition}\label{prop:bound_prob_overestimation_DNML}
 Suppose that $\A$ is generated from a SBM with $k_0$ communities and parameters $\pi_0$ and $P_0$. Let $\hat k_n$ denote any of the estimators $\hk$ or $\hkNMCL$. For $k > k_0$, it follows that
\begin{equation}
\Pto(\hat k_n(\A)=k) \leq \exp\left( \frac{k_0(k_0+2)-1}{2}\log n + n \log k_0 + d_{k_0} + \pen(k_0,n) -\pen(k,n) \right) .
\end{equation}
where $d_{k_0}$ depends only on $k_0$.
\end{proposition}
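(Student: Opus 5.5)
The plan is the standard "change of measure over the event" argument used for penalized normalized-likelihood estimators. Fix $k>k_0$ and let $\hat k_n$ be either $\hk$ or $\hkNMCL$; write $L_k(\a)$ for $\DNML_k(\a,\hz^{(k)})$, respectively $\NMCL_k(\a,\hz^{(k)})$, where $\hz^{(k)}=\hz^{(k)}(\a)$ is the estimated assignment with $k$ groups from \eqref{eq:estimate_z}. On the event $\{\hat k_n(\A)=k\}$ we have $\log L_k(\A)-\pen(k,n)\ge \log L_{k_0}(\A)-\pen(k_0,n)$. Equivalently, $\Pto(\A=\a)\le \Pto(\A=\a)\,\frac{L_k(\a)}{L_{k_0}(\a)}e^{\pen(k_0,n)-\pen(k,n)}$ for every $\a$ in that event.

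The first step is to bound $\Pto(\a)/L_{k_0}(\a)$ uniformly in $\a$. Expanding the marginal likelihood gives
\[\Pto(\a)=\sum_{\e\in[k_0]^n}\mathbb P_{\pi_0,P_0}(\a,\e)\le k_0^n\max_{\e}\sup_{\pi,P}\Pt(\a,\e).\]
By the definition \eqref{eq:estimate_z} of $\hz^{(k_0)}$, the maximum equals $\sup_{\pi,P}\Pt(\a,\hz^{(k_0)})$. This is exactly where the $n\log k_0$ term arises.

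For the DNML, the joint supremum factorizes because $\Pt(\a,\z)=\P(\a\mid\z)\Pp(\z)$ with separate parameters. So
\[\sup_{\pi,P}\Pt(\a,\hz^{(k_0)})=\DNML_{k_0}(\a,\hz^{(k_0)})\,\CDMNL(\hz^{(k_0)},k_0)\,\CDMNLz(n,k_0).\]
Proposition \ref{prop:razao_L_KT} bounds the logarithm of the product of the normalizers by $\frac{k_0(k_0+2)-1}{2}\log n+c_{k_0}$, uniformly in $\z$. For the NMCL the analogous identity is $\sup\Pt(\a,\hz^{(k_0)})=\NMCL_{k_0}(\a,\hz^{(k_0)})\CNMCL(n,k_0)$, and Proposition \ref{prop:razao_L_KT} bounds $\log\CNMCL$ by the same order with constant $d_{k_0}$. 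Taking $d_{k_0}=\max(c_{k_0},d_{k_0})$, we obtain
\[\Pto(\a)\le \exp\!\Big(\tfrac{k_0(k_0+2)-1}{2}\log n+n\log k_0+d_{k_0}\Big)L_{k_0}(\a)\quad\text{for all }\a.\]

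The second step sums over the event:
\[\Pto(\hat k_n=k)\le e^{\frac{k_0(k_0+2)-1}{2}\log n+n\log k_0+d_{k_0}+\pen(k_0,n)-\pen(k,n)}\sum_{\a}L_k(\a).\]
It remains to show $\sum_{\a}L_k(\a)\le 1$. For the DNML, $L_k(\a)=\DNML_k(\a,\hz^{(k)}(\a))\le\sum_{\z}\DNML_k(\a,\z)$, and the normalization identity $\sum_{\z}\sum_{\a}\DNML_k(\a\mid\z)\DNML_k(\z)=1$ gives the bound. For the NMCL, likewise, $\sum_{\a}\NMCL_k(\a,\hz^{(k)}(\a))\le\sum_{\a}\sum_{\e}\NMCL_k(\a,\e)=1$ by the definition of $\CNMCL$.

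I expect the main point needing care to be this last step. Because $\hz^{(k)}$ depends on $\a$, the quantity $L_k$ is not itself a probability, and one must dominate it by the sum over all assignments rather than treating it as normalized. I would also check that the strict/non-strict tie convention in the argmax does not matter, since the inequality used above is non-strict either way. The rest is bookkeeping of constants from Proposition \ref{prop:razao_L_KT}.
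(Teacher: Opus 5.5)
Your proposal is correct and follows essentially the same route as the paper. You bound the true marginal probability of $\a$ by $k_0^n$ times the maximized complete likelihood at $\hz$, and convert this into $L_{k_0}(\a)$ via the uniform normalizer bounds of Proposition~\ref{prop:razao_L_KT}. You then pass to $L_k(\a)e^{\pen(k_0,n)-\pen(k,n)}$ on the selection event and conclude with $\sum_{\a}L_k(\a)\le 1$, dominating by the sum over all assignments. The only cosmetic difference is that you extract the factor $k_0^n$ up front, whereas the paper bounds each term of $\sum_{\z\in[k_0]^n}$ separately and then counts the terms.
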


\begin{proof}
We begin by considering the estimator $\hk$. For any network $\a\in \{0,1\}^{n\times n}$, let $\hz$ be the communities estimated with $k_0$ communities, that is
\begin{equation}
    \hz(\a) =  \arg\max\limits_{\z \in [k_0]^n} \left\lbrace\sup\limits_{{\pi \in  \Pi^{k_0}, P\in\mathcal{P}^{k_0}}}\Pt(\a, \z) \right\rbrace.
\end{equation}

For $k>k_0$, define $\hz^*$ the communities estimated in the larger model given by
\begin{equation}
    \hz^{*}(\a) = \arg\max\limits_{\z \in [k]^n} \left\lbrace\sup\limits_{{\pi \in  \Pi^{k}, P\in\mathcal{P}^{k}}}\Pt(\a, \z) \right\rbrace.
\end{equation}

By Proposition \ref{prop:razao_L_KT}, for all $\a\in \{0,1\}^{n\times n}$ we have that
\begin{equation}\label{eq:MV_MCML}
\begin{split}
    \log\sup\limits_{{\pi \in  \Pi^{k_0}, P\in\mathcal{P}^{k_0}}}&\Pt(\a, \hz(\a))  - \log\DNML_{k_0}(\a, \hz(\a))  \\
     & = \log \CDMNL(\hz(\a),k_0)+\CDMNLz(n,k_0) \\
     &\leq \left(\frac{k_0(k_0+2)-1}{2} \right)\log n +c_{k_0}.
\end{split}
\end{equation}

Taking $k> k_0$, by the definition of the estimators \eqref{eq:est_NML} and \eqref{eq:est_DNML}, observe that the event $\{\hk(\A)=k\}$ is contained in the event 
\begin{equation}
\begin{split}
\mathcal{B}(\A&, \hz^*(\A),\hz(\A)) \\
& =\left\lbrace\log\DNML_k(\A, \hz^*(\A)) - \log\DNML_{k_0}(\A, \hz(\A))  >  -\Delta_{k_0,k}\right\rbrace\\
& = \{\DNML_{k_0}(\A,\hz(\A)) \leq\DNML_k(\A, \hz^*(\A)e^{\Delta_{k_0,k})}\}
\end{split}
\end{equation}
with
\[\Delta_{k_0,k}=\pen(k_0,n) -\pen(k,n).\]
It follows that
\begin{equation}\label{eq:ML_NCML}
\begin{split}
\quad \Pto(\hk(\A)&=k)  = \mathbb{E}_{\pi_0,P_0}[\ind\{ \hk(\A)=k \}] \\
&\leq\sum\limits_{\a}\Pto(\A=\a)\ind\{\mathcal{B}(\a, \hz^*(\a),\hz(\a))\}\\
&=\sum\limits_{\a}\sum\limits_{\z\in [k_0]^n}\Po(\a\mid \z)\mathbb P_{\pi_0}(\z)\ind\{\mathcal{B}(\a, \hz^*(\a),\hz(\a))\}.
\end{split}
\end{equation}
By Equation \eqref{eq:MV_MCML} and by the definition of $\hz(\a)$, we have
\begin{equation}
\begin{split}
\log\Po(\a\mid \z)P_{\pi_0}(\z) &\leq \log\sup\limits_{\pi \in  \Pi^{k_0}, P\in\mathcal{P}^{k_0}}\P(\a\mid \hz(\a))\Pp(\hz(\a))\\
&\leq \log \DNML_{k_0}(\a, \hz(\a)) + \left(\frac{k_0(k_0+2)-1}{2} \right)\log n +c_{k_0}.
\end{split}
\end{equation}
Combining the last equation with \eqref{eq:ML_NCML} and with the definition of $\mathcal{B}$, we get
\begin{equation}
\begin{split}
&\Pto(\hk(\A)=k)  \\
&\leq\sum\limits_{\a}\sum\limits_{\z\in [k_0]^n}\DNML_{k_0}(\a, \hz(\a))n^{\frac{k_0(k_0+2)-1}{2}}e^{c_{k_0}}\ind\{\mathcal{B}(\a, \hz^*(\a),\hz(\a))\}\\
&\leq \sum\limits_{\a}n^{\frac{k_0(k_0+2)-1}{2}}k_0^n e^{c_{k_0}}\DNML_k(\a, \hz^*(\a))e^{ \Delta_{k_0,k}}.
\end{split}
\end{equation}
The result follows for the estimator $\hk$ by using that $\sum\limits_{\a}\DNML_k(\a, \hz^*(\a))\leq 1$. Indeed,
\begin{equation}
\begin{split}
 \sum\limits_{\a}&\DNML_k(\a\mid \hz^*(\a))\DNML_k(\hz^*(\a)) \\
 & \leq \sum\limits_{\e\in [k]^n}\sum\limits_{\a}\DNML_k(\a\mid \e)\DNML_k(\e)\\
 & = 1
\end{split}
\end{equation}

We proceeded in the same way for the estimator $\hkNMCL$. First, by Proposition \ref{prop:razao_L_KT}, for all $\a\in \{0,1\}^{n\times n}$, we have that
\begin{equation}
\begin{split}
    \log\sup\limits_{{\pi \in  \Pi^{k_0}, P\in\mathcal{P}^{k_0}}}&\Pt(\a, \hz(\a))  - \log\NMCL_{k_0}(\a, \hz(\a))  \\
     & = \log \CNMCL(n,k_0) \\
     &\leq \left(\frac{k_0(k_0+2)-1}{2} \right)\log n +d_{k_0}.
\end{split}
\end{equation}
By defining the event
\begin{equation}
\begin{split}
\mathcal{E}(\A&, \hz^*(\A),\hz(\A)) \\
& = \{\NMCL_{k_0}(\A,\hz(\A)) \leq\NMCL_k(\A, \hz^*(\A))e^{\Delta_{k_0,k}}\}
\end{split}
\end{equation}
we obtain
\begin{equation}
\begin{split}
&\Pto(\hkNMCL(\A)=k)  \\
&\leq\sum\limits_{\a}\NMCL_{k_0}(\a, \hz(\a))n^{\frac{k_0(k_0+2)-1}{2}}k_0^n e^{d_{k_0}}\ind\{\mathcal{E}(\a, \hz^*(\a),\hz(\a))\}\\
&\leq \sum\limits_{\a}n^{\frac{k_0(k_0+2)-1}{2}}k_0^n e^{d_{k_0}}\NMCL_k(\a, \hz^*(\a))e^{ \Delta_{k_0,k}}.
\end{split}
\end{equation}
The result follows by observing that
\begin{equation}
\begin{split}
 \sum\limits_{\a}&\NMCL_k(\a, \hz^*(\a)) \leq  \sum\limits_{\e}\sum\limits_{\a}\NMCL_k(\a, \e) =1.
\end{split}
\end{equation}
\end{proof}

\begin{proposition}\label{prop:bound_prob_overestimation_NML}
 Suppose that $\A$ is generated from a SBM with $k_0$ communities and parameters $\pi_0$ and $P_0$. For $k > k_0$, it follows that
\begin{equation}
\Pto(\hkNML(\A)=k) \leq \exp\left( \frac{k_0(k_0+2)-1}{2}\log n + d_{k_0} + \pen(k_0,n) -\pen(k,n) \right) .
\end{equation}
where $d_{k_0}$ depends only on $k_0$.
\end{proposition}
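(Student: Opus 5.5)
We adapt the argument of Proposition \ref{prop:bound_prob_overestimation_DNML}. Because the NML involves no estimated community assignment, the sum over $\z\in[k_0]^n$ can be absorbed into the marginal likelihood, and the factor $k_0^n$ disappears.

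First, for every network $\a$ the true marginal probability is bounded by the maximized one:
\begin{equation}
\Pto(\a)\le \sup_{\pi\in\Pi^{k_0},P\in\mathcal{P}^{k_0}}\Pt(\a) = \NML_{k_0}(\a)\,\CNML(n,k_0).
\end{equation}
By Proposition \ref{prop:razao_L_KT},
\begin{equation}
\log\CNML(n,k_0)\le \frac{k_0(k_0+2)-1}{2}\log n + d_{k_0}.
\end{equation}
Combining the two, for all $\a$,
\begin{equation}
\Pto(\a)\le n^{\frac{k_0(k_0+2)-1}{2}}e^{d_{k_0}}\,\NML_{k_0}(\a).
\end{equation}

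Second, for $k>k_0$, write $\Delta_{k_0,k}=\pen(k_0,n)-\pen(k,n)$. By the definition \eqref{eq:est_NML}, the event $\{\hkNML(\A)=k\}$ is contained in
\begin{equation}
\mathcal{B}(\A)=\{\NML_{k_0}(\A)\le \NML_k(\A)\,e^{\Delta_{k_0,k}}\},
\end{equation}
since $k$ must beat $k_0$ in the penalized criterion. Then
\begin{equation}
\Pto(\hkNML(\A)=k)\le \sum_{\a}\Pto(\a)\,\ind\{\mathcal{B}(\a)\}\le n^{\frac{k_0(k_0+2)-1}{2}}e^{d_{k_0}}\sum_{\a}\NML_{k_0}(\a)\,\ind\{\mathcal{B}(\a)\}.
\end{equation}
On $\mathcal{B}(\a)$ we may replace $\NML_{k_0}(\a)$ by $\NML_k(\a)e^{\Delta_{k_0,k}}$, giving the bound
\begin{equation}
n^{\frac{k_0(k_0+2)-1}{2}}e^{d_{k_0}}e^{\Delta_{k_0,k}}\sum_{\a}\NML_k(\a).
\end{equation}

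Finally, $\NML_k(\a)$ is a probability distribution over networks, by the definition of $\CNML(n,k)$ as the sum of the numerator over all $\x$. Hence $\sum_{\a}\NML_k(\a)=1$, and taking logarithms yields exactly the stated bound.

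No step here is genuinely hard. The only point needing care is the first inequality, where we bound the true marginal $\Pto(\a)$ directly by the supremum of the marginal likelihood. This avoids the decomposition over $\z$ that produced the $k_0^n$ term in Proposition \ref{prop:bound_prob_overestimation_DNML}.
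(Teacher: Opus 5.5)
Your proof is correct and follows essentially the same route as the paper: bound $\Pto(\a)\le\sup_{\pi\in\Pi^{k_0},P\in\mathcal{P}^{k_0}}\Pt(\a)=\NML_{k_0}(\a)\,\CNML(n,k_0)$, control $\CNML(n,k_0)$ by Proposition~\ref{prop:razao_L_KT}, restrict to the event $\{\NML_{k_0}(\A)\le \NML_k(\A)e^{\Delta_{k_0,k}}\}$, and finish with $\sum_{\a}\NML_k(\a)=1$. Your first inequality holds because $(\pi_0,P_0)\in\Pi^{k_0}\times\mathcal{P}^{k_0}$; the paper uses this step only implicitly, and it is exactly why no $k_0^n$ factor appears, unlike in the DNML/NMCL case.
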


\begin{proof}
For the estimator $\hkNML$, we observe that, for all $\a\in \{0,1\}^{n\times n}$, by Proposition \ref{prop:razao_L_KT} we have that
\begin{equation}\label{eq:MV_NML}
\begin{split}
    \log\sup\limits_{\pi \in  \Pi^{k_0}, P\in\mathcal{P}^{k_0}}\Pt(\a)  - \log\NML_{k_0}(\a)  &= \log \CNML(n,k_0) \\
     &\leq \frac{k_0(k_0+2)-1}{2} \log n +d_{k_0}.
\end{split}
\end{equation}
Thus, the result follows from the same argument used in the proof of Proposition \ref{prop:bound_prob_overestimation_DNML} by defining the event
\begin{equation}
\begin{split}
\mathcal{F}(\A) = \{\NML_{k_0}(\A) \leq\NML_k(\A)e^{\Delta_{k_0,k}}\}
\end{split}
\end{equation}
with $\Delta_{k_0,k}=\pen(k_0,n) -\pen(k,n)$ to obtain
\begin{equation}
\begin{split}
\Pto(\hkNML(\A)=k) &\leq\sum\limits_{\a}\NML_{k_0}(\a)n^{\frac{k_0(k_0+2)-1}{2}}e^{d_{k_0}}\ind\{\mathcal{F}(\a)\}\\
&\leq \sum\limits_{\a}n^{\frac{k_0(k_0+2)-1}{2}}e^{d_{k_0}}\NML_k(\a)e^{ \Delta_{k_0,k}}.
\end{split}
\end{equation}
We conclude the proof using the fact that $\sum_{\a}\NML_k(\a)=1$.    
\end{proof}

The next lemma is a technical result used to prove the non-underestimation property of the proposed estimators. It shows that, asymptotically, the likelihood of a SBM with the true number of communities dominates that of any model with fewer communities. This result was established in Lemma 7 (for $\rho_n=1$) and Lemma 8 (for $\rho_n\to 0$) in \cite{cerqueira2020}, and its proof is therefore omitted.

\begin{lemma}\label{lemma:cerqueira20}
    Suppose that $(\A,\Z)$ is generated from a SBM with $k_0$ communities and parameters $\pi_0$ and $P_0=\rho_nS_0$ with either $\rho_n \to 0$ and $n\rho_n\to \infty$, or $\rho_n = 1$. Then, we have that
    \begin{equation}
\begin{split}
\liminf_{n\to \infty}\frac{1}{\rho_n n^2}&\log  \frac{\sup\limits_{{\pi \in  \Pi^{k_0}, P\in\mathcal{P}^{k_0}}}\Pt(\A, \Z)}{\sup\limits_{{\pi \in  \Pi^{k_0-1}, P\in\mathcal{P}^{k_0-1}}}\Pt(\A, \hz^*(\A))} > 0
\end{split}
\end{equation}
with $\hz^{*}(\a) = \arg\max\limits_{\z \in [k_0-1]^n} \left\lbrace \sup\limits_{{\pi \in  \Pi^{k_0-1}, P\in\mathcal{P}^{k_0-1}}}\Pt(\a, \z)\right\rbrace$.
\end{lemma}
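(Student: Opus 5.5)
We propose to follow the strategy of Lemmas 7 and 8 in \cite{cerqueira2020}, which we sketch here since the proof is omitted. Write the log-ratio as a difference of two profiled complete log-likelihoods. The numerator, evaluated at the true labels $\Z$, is bounded below by $\log\mathbb P_{\pi_0,P_0}(\A,\Z)$. The denominator is a maximum over the finitely many $(k_0-1)$-label assignments $\z\in[k_0-1]^n$, at the profile MLE. For a fixed $\z$, the profile complete log-likelihood for the network part is
\[
\sum_{a\le b} n_{ab}(\z)\, h\!\left(\frac{o_{ab}(\A,\z)}{n_{ab}(\z)}\right),\qquad h(p)=p\log p+(1-p)\log(1-p).
\]
The label part $\sum_a n_a\log(n_a/n)$ is of order $n$. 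Since $n\ll \rho_n n^2$ because $n\rho_n\to\infty$, this label part is negligible at scale $\rho_n n^2$, and so is the $\pi$ term in the numerator.

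We will compare both sides with their ``population'' versions via the confusion matrix. For $\z\in[k_0-1]^n$, let $R(\z)$ be the $k_0\times(k_0-1)$ matrix with entries $R_{ua}=\frac1n\#\{i:Z_i=u,z_i=a\}$. Conditionally on $\Z$, the count $o_{ab}(\A,\z)$ has mean $n^2\rho_n (R^\top S_0R)_{ab}$, up to diagonal corrections. The steps, in order, are as follows.

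\emph{Step 1 (concentration).} We show that, uniformly over all $\z\in[k_0-1]^n$ and all pairs $a\le b$,
\[
\bigl|o_{ab}(\A,\z)-\mathbb E[o_{ab}\mid \Z]\bigr| \le \eta\,\rho_n n^2
\]
eventually almost surely, for every $\eta>0$. We plan to use Bernstein's inequality for sums of independent Bernoullis, whose variance is at most $\rho_n n^2$, at deviation $\eta\rho_n n^2$. This gives tail $\exp(-c\eta^2\rho_n n^2)$. A union bound over $(k_0-1)^n$ labelings costs $e^{n\log k_0}$, which is beaten because $\rho_n n^2/n=n\rho_n\to\infty$. Summability in $n$ then gives almost sure convergence by Borel--Cantelli.

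\emph{Step 2 (expansion in the sparse regime).} Using $h(p)=p\log p-p+O(p^2)$ for small $p$, the network part scaled by $1/(\rho_n n^2)$ becomes, up to terms that are the same for both models (namely the $\log\rho_n$ terms, whose total coefficient is the total edge count), a continuous functional
\[
G(R)=\sum_{a\le b}(R^\top S_0 R)_{ab}\log\frac{(R^\top S_0R)_{ab}}{(R^\top\mathbf 1)_a(R^\top\mathbf 1)_b}
\]
in the sparse case. In the case $\rho_n=1$ one uses the corresponding Bernoulli entropy functional instead. The true labels correspond to $R_0=\mathrm{diag}(\pi_0)$, and by the strong law $n_u(\Z)/n\to\pi_{0,u}$.

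\emph{Step 3 (strict gap).} We show that $G(R_0)-\sup_{R}G(R)>0$, where the supremum is over $k_0\times(k_0-1)$ nonnegative matrices $R$ with row sums $\pi_0$. This is the main obstacle. The approach is a convexity (log-sum) inequality: merging blocks through $R$ can only lose likelihood, that is $G(R)\le G(R_0)$, with equality only if the merged rows of $S_0$ have identical profiles. The hypothesis that $P_0$ has no identical columns rules out equality. Since the feasible set is compact and $G$ is continuous, the supremum is attained, and the inequality is strict.

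Combining Steps 1--3, with $\eta$ small and the uniform continuity of $h$ on the relevant range, gives that the $\liminf$ is at least the gap $G(R_0)-\max G>0$. Near-empty blocks need separate care, since there $p\log p$ is not Lipschitz. We will handle them by noting that their contribution is $o(\rho_n n^2)$ uniformly.
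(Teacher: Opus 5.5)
Your architecture is the standard route behind Lemmas 7 and 8 of \cite{cerqueira2020}, which the paper cites instead of proving. That route has four parts: profile likelihoods, concentration uniform over the $(k_0-1)^n$ labelings (paid for by $n\rho_n\to\infty$), a population functional of the confusion matrix, and a strict gap from a log-sum/convexity inequality plus the no-identical-columns assumption. In the sparse case, however, the step you defer is not a consequence of Step 1, and it is exactly where the difficulty lies. That step is the claim that near-empty blocks contribute $o(\rho_n n^2)$ uniformly in $\z$.

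After removing the common terms $|E|\log\rho_n-|E|$, a block pair contributes $o_{ab}\log\frac{o_{ab}}{\rho_n n_{ab}}$, up to the nonnegative remainder $r_{ab}\le o_{ab}^2/n_{ab}$ coming from $(1-p)\log(1-p)$. Step 1 only tells you $o_{ab}\le \mathbb E[o_{ab}\mid\Z]+\eta\rho_n n^2$. This is compatible with a block of about $\sqrt{2\eta\rho_n}\,n$ vertices being a clique, so that $o_{aa}=n_{aa}\approx\eta\rho_n n^2$. Such a block contributes $o_{aa}\log\frac{o_{aa}}{\rho_n n_{aa}}=\eta\rho_n n^2\log(1/\rho_n)$, which after dividing by $\rho_n n^2$ diverges for every fixed $\eta$. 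Letting $\eta=\eta_n\to0$ does not rescue this. You would need $\eta_n\log(1/\rho_n)\to0$ while keeping $\eta_n^2 n\rho_n$ bounded below for the union bound, and these are incompatible, e.g., for $n\rho_n=\log\log n$. So additive concentration plus continuity cannot deliver the $\liminf$ under only $n\rho_n\to\infty$.

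What is missing is a multiplicative (Poisson-type) upper-tail bound on the excess density of every block. If $X$ is a sum of independent Bernoulli variables with mean at most $\lambda$, then $\mathbb P(X\ge x)\le (e\lambda/x)^x$ for $x\ge\lambda$. Take $\lambda_{ab}=\rho_n s^* n_{ab}(\z)$ with $s^*=\max_{u,v}(S_0)_{uv}$. This yields $\mathbb P\bigl(o_{ab}\log\frac{o_{ab}}{e\lambda_{ab}}\ge\epsilon\rho_n n^2\bigr)\le e^{-\epsilon\rho_n n^2}$. Your same union bound over $k_0^2(k_0-1)^n$ events and Borel--Cantelli (again using $n\rho_n\to\infty$) then gives, eventually almost surely and uniformly in $\z$, $o_{ab}\log\frac{o_{ab}}{\rho_n n_{ab}}\le o_{ab}|\log(e s^*)|+\epsilon\rho_n n^2$.

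Combined with Step 1, blocks with $n_{ab}\le\delta n^2$ contribute at most $C(\delta+\eta+\epsilon)\rho_n n^2$, including the remainder $r_{ab}\le o_{ab}$. On blocks with $n_{ab}>\delta n^2$ one has $o_{ab}/n_{ab}=O(\rho_n)$, so your expansion and continuity argument go through. Only an upper bound is needed on the $(k_0-1)$-model, and all true-label blocks are large, so this one-sided estimate suffices. In the dense case $\rho_n=1$ the issue does not arise, since $n_{ab}|h|\le n_{ab}\log 2$.

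A smaller bookkeeping point concerns the weights in $G$. Because $o_{aa}$ counts pairs $i<j$ and $n_{aa}\approx n_a^2/2$, the limit functional is $G(R)=\frac12\sum_{a,b}M_{ab}\log\frac{M_{ab}}{N_aN_b}$ with $M=R^\top S_0R$ and $N=R^\top\mathbf 1$. With these weights, the log-sum inequality gives exactly $G(R)\le\frac12\sum_{u,v}\pi_{0,u}\pi_{0,v}(S_0)_{uv}\log(S_0)_{uv}=G(R_0)$. With the weights you wrote (diagonal not halved), that comparison does not follow. Once this is corrected, your pigeonhole argument for strict inequality is fine.
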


\section{Non-overestimation}\label{sec:non_overestimation}

In this section, we prove that none of the estimators $\hkNML$, $\hkNMCL$, or $\hk$ overestimate the true number of communities $k_0$. The proof is divided into two parts: first, we show that the estimator does not fall within the interval $(k_0, \log n]$, and second, that it does not fall within the interval $(\log n, n]$, eventually almost surely as $n\to \infty$.
The key idea of the following result is to choose an appropriate expression for the penalty term $\pen(k,n)$ so that the probability of selecting $k$ communities, as given in Propositions \ref{prop:bound_prob_overestimation_DNML} and \ref{prop:bound_prob_overestimation_NML}, is summable over $k > k_0$ and $n$. Consequently, the specific formula for the penalty function for each estimator follows directly from these results.

\begin{proposition}\label{prop:non_overestimation_DNML}
Suppose that $\A$ is generated from a SBM with $k_0$ communities and parameters $\pi_0$ and $P_0$. Then, for the penalty function given by \eqref{eq:pen_DNML}, the estimators $\hkNMCL(\A)$ and $\hk(\A)$ do not overestimate $k_0$ eventually almost surely as $n \to \infty$.
\end{proposition}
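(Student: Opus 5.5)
We will apply Proposition \ref{prop:bound_prob_overestimation_DNML} together with the Borel--Cantelli lemma. The goal is to show that
\[
\sum_{n}\sum_{k=k_0+1}^{n}\Pto(\hat k_n(\A)=k)<\infty ,
\]
where $\hat k_n$ is either $\hk$ or $\hkNMCL$. It then follows that $\{\hat k_n(\A)>k_0\}$ occurs only finitely often almost surely. By Proposition \ref{prop:bound_prob_overestimation_DNML}, each term is bounded by $\exp(E_{n,k})$, where
\[
E_{n,k}=\tfrac{k_0(k_0+2)-1}{2}\log n + n\log k_0 + d_{k_0} + \pDNML(k_0,n)-\pDNML(k,n).
\]
So we need to control $\pDNML(k,n)-\pDNML(k_0,n)$ for $k>k_0$. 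From \eqref{eq:pen_DNML} this difference equals
\[
\sum_{i=k_0}^{k-1}\frac{i(i+2)+1+\epsilon}{2}\log n + n\sum_{i=k_0}^{k-1}\log i .
\]

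The linear term absorbs $n\log k_0$. Since $k>k_0$, the sum $n\sum_{i=k_0}^{k-1}\log i$ contains the $i=k_0$ summand $n\log k_0$, and every other summand is nonnegative. Hence $n\log k_0 - n\sum_{i=k_0}^{k-1}\log i\le 0$, and the $n$-order terms are disposed of.

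For the $\log n$ terms, the $i=k_0$ summand is $\frac{k_0(k_0+2)+1+\epsilon}{2}\log n$. It exceeds $\frac{k_0(k_0+2)-1}{2}\log n$ by $(1+\epsilon/2)\log n$. Each further summand with $i\ge k_0+1$ is at least $\frac{i^2}{2}\log n\ge \log n$. Therefore
\[
E_{n,k}\le d_{k_0} - (1+\epsilon/2)\log n - (k-1-k_0)\log n .
\]
Summing over $k>k_0$ gives
\[
\sum_{k>k_0}\Pto(\hat k_n=k)\le e^{d_{k_0}}n^{-(1+\epsilon/2)}\sum_{j\ge 0}n^{-j}\le 2e^{d_{k_0}}n^{-(1+\epsilon/2)}\quad (n\ge 2),
\]
which is summable in $n$. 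Borel--Cantelli then yields $\hat k_n(\A)\le k_0$ eventually almost surely. This works for both estimators, because Proposition \ref{prop:bound_prob_overestimation_DNML} covers $\hk$ and $\hkNMCL$ simultaneously.

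The main obstacle is checking that Proposition \ref{prop:bound_prob_overestimation_DNML} can be applied uniformly over all $k$ up to $n$, with the constant $d_{k_0}$ depending only on $k_0$ and not on $k$. Inspecting that proof, the $k$-dependence enters only through the normalization $\sum_{\a}\DNML_k(\a,\hz^*(\a))\le 1$, which carries no constant, so uniformity holds. Consequently, no separate split into the ranges $(k_0,\log n]$ and $(\log n,n]$ is needed: the single geometric bound above handles all $k$. If some hidden $k$-dependence did appear, the fallback would be to treat $k>\log n$ separately, using the fact that the cubic $\log n$ term and $n\log((k-1)!)$ grow fast enough to dominate any such constant.
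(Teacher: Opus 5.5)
Your proof is correct, and it takes a somewhat different route from the paper's.

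\textbf{The paper's route.} The paper splits the range of $k$ into $(k_0,\log n]$ and $(\log n,n]$. On each piece it uses only monotonicity of $\pen(k,n)$ in $k$: every term is bounded by the one at the left endpoint, then multiplied by the number of terms. This gives $\log n\, e^{d_{k_0}} n^{-1-\epsilon/2}$ on the first piece. On the second piece it gives $e^{d_{k_0}} n^{1-b_n}$, where $b_n\to\infty$ because $\pen(\log n,n)/\log n$ contains the term $n\log((\log n-1)!)/\log n$.

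\textbf{Your route.} You instead telescope $\pDNML(k,n)-\pDNML(k_0,n)$ into its increments. The $i=k_0$ increment cancels the $k_0$-dependent terms $\frac{k_0(k_0+2)-1}{2}\log n+n\log k_0$ from Proposition \ref{prop:bound_prob_overestimation_DNML}, with $(1+\epsilon/2)\log n$ to spare. Every later increment contributes at least one extra factor $n^{-1}$. The whole tail is therefore a geometric series bounded by $2e^{d_{k_0}}n^{-1-\epsilon/2}$.

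\textbf{Comparison.} Your argument is shorter and gives a sharper bound, and it avoids the non-integer cut-off at $\log n$. It also shows that, for fixed $k_0$, the penalty's only real job is to make its first increment above $k_0$ dominate the $k_0$-dependent terms. Later increments need only be of order $\log n$, so the super-linear growth of $\pDNML$ in $k$ that the paper invokes on $(\log n,n]$ is not needed for the tail. The paper's route asks for something different rather than more: monotonicity in $k$, the size of the single increment from $k_0$ to $k_0+1$, and a growth condition at $k=\log n$, instead of a lower bound on every increment.

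You correctly note that the bound in Proposition \ref{prop:bound_prob_overestimation_DNML} depends on $k$ only through $\pen(k,n)$, so $d_{k_0}$ is uniform in $k$. That uniformity is exactly what makes your single geometric sum legitimate, and the paper relies on the same uniformity implicitly when it sums over $k$ up to $n$.
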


\begin{proof}
Let $\hat k_n$ denote any of the estimators $\hkNMCL$ or $\hk$.
We first show that \[\hat k_n(\A)\notin (k_0,\log n] \]
eventually almost surely as $n \to \infty$. By Proposition \ref{prop:bound_prob_overestimation_DNML}, we obtain
\begin{equation}
 \begin{split}
\Pto(\hat k_n&\in (k_0,\log n]) = \sum\limits_{k=k_0+1}^{\log n} \Pto(\hat k_n(\A)=k)\\
& \leq \sum\limits_{k=k_0+1}^{\log n} \exp\left( \frac{k_0(k_0+2)-1}{2}\log n + n\log k_0+ d_{k_0} + \pen(k_0,n) -\pen(k,n) \right) \\
& \leq \log n \exp\left( \frac{k_0(k_0+2)-1}{2}\log n + n\log k_0 + d_{k_0} + \pen(k_0,n) -\pen(k_0+1,n) \right) .
 \end{split}
\end{equation}
Using the penalty function defined in \eqref{eq:pen_DNML} we have, for $\epsilon>0$, that
\begin{equation}
\pen(k_0,n) -\pen(k_0+1,n) = - \left( \frac{k_0(k_0+2)-1}{2} + 1+ \epsilon/2\right)\log n -n\log k_0.
\end{equation}
The result follows from the Borel–Cantelli lemma, since
\begin{equation}
    \sum\limits_{n=1}^{\infty}\Pto(\hkNML(\A) \in (k_0,\log n]) =  e ^{d_{k_0}}\sum\limits_{n=1}^{\infty}\frac{\log n}{n^{1+\epsilon/2}} < \infty.
\end{equation}
We now show that \[\hat k_n(\A)\notin (k_0,\log n] \]
eventually almost surely as $n \to \infty$.  Following the same approach as before, we use Proposition \ref{prop:bound_prob_overestimation_DNML} to obtain
\begin{equation}
 \begin{split}
\Pto(\hat k_n(\A)&\in (\log n,n]) \leq \sum\limits_{k=\log n}^{n} \Pto(\hat k_n(\A)=k)\\
& \leq n \exp\left( \frac{k_0(k_0+2)-1}{2}\log n + n\log(k_0)+d_{k_0} + \pen(k_0,n) -\pen(\log n,n) \right) \\
& \leq e^{d_{k_0}} n \exp\left[ -b_n\log n \right] .
 \end{split}
\end{equation}
with $b_n=\left(-\frac{k_0(k_0+2)-1}{2} -\frac{\pen(k_0,n)}{\log n} +\frac{\pen(\log n,n)}{\log n} -  \frac{n\log k_0}{\log n}\right)$. Since $\pen(k,n)$ contains a term of order $n \log\big((k-1)!\big)$, the dominant contribution to $b_n$ comes from $\pen(\log n,n)/\log n$, and hence $b_n \to \infty$ as $n \to \infty$.
Thus, the result follows from the Borel–Cantelli lemma, since
\begin{equation}
    \sum\limits_{n=1}^{\infty}\Pto(\hat k_n(\A) \in (\log n,n])  < \infty.
\end{equation}
\end{proof}

\begin{proposition}\label{prop:non_overestimation_NML}
Suppose that $\A$ is generated from a SBM with $k_0$ communities and parameters $\pi_0$ and $P_0$. Then, for the penalty function given by \eqref{eq:pen_NML}, the estimators $\hkNML(\A)$ do not overestimate $k_0$ eventually almost surely as $n \to \infty$.
\end{proposition}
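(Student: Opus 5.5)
The plan mirrors the proof of Proposition \ref{prop:non_overestimation_DNML}, using Proposition \ref{prop:bound_prob_overestimation_NML} in place of Proposition \ref{prop:bound_prob_overestimation_DNML}. Proposition \ref{prop:bound_prob_overestimation_NML} carries no $n\log k_0$ term, so the purely logarithmic penalty \eqref{eq:pen_NML} should be enough. I will split the overestimation event $\{\hkNML(\A)>k_0\}$ into $\{\hkNML(\A)\in(k_0,\log n]\}$ and $\{\hkNML(\A)\in(\log n,n]\}$. For each piece I will show that the probabilities are summable in $n$ and then apply the Borel--Cantelli lemma.

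\textbf{First range.} For $k\in(k_0,\log n]$, Proposition \ref{prop:bound_prob_overestimation_NML} bounds each probability by $\exp\bigl(\tfrac{k_0(k_0+2)-1}{2}\log n + d_{k_0} + \pen(k_0,n)-\pen(k,n)\bigr)$. Since $\pNML(k,n)$ is increasing in $k$, this is at most the same expression with $k$ replaced by $k_0+1$. From \eqref{eq:pen_NML},
\[
\pNML(k_0+1,n)-\pNML(k_0,n)=\frac{k_0(k_0+2)+1+\epsilon}{2}\log n .
\]
This exceeds $\tfrac{k_0(k_0+2)-1}{2}\log n$ by $(1+\epsilon/2)\log n$. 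Summing over at most $\log n$ values of $k$ therefore gives
\[
\Pto\bigl(\hkNML(\A)\in(k_0,\log n]\bigr)\le e^{d_{k_0}}\,\frac{\log n}{n^{1+\epsilon/2}},
\]
which is summable in $n$.

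\textbf{Second range.} For $k\in(\log n,n]$, I will again use monotonicity to bound each term by the case $k=\lceil\log n\rceil$, and multiply by at most $n$ terms. This gives a bound of
\[
n\exp\Bigl(\tfrac{k_0(k_0+2)-1}{2}\log n+d_{k_0}+\pen(k_0,n)-\pen(\lceil\log n\rceil,n)\Bigr).
\]
Here $\pen(k_0,n)=O(\log n)$, while $\pNML(\lceil\log n\rceil,n)$ is of order $(\log n)^3/6\cdot\log n$, that is, of order $(\log n)^4$. The bound is therefore at most $\exp\bigl(C_{k_0}\log n - c(\log n)^4\bigr)$ for large $n$, which is eventually smaller than $n^{-2}$ and hence summable.

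\textbf{Main obstacle.} I expect the only delicate point to be this second range. There the crude union bound over up to $n$ values of $k$ costs a factor $n$, and one must check that the $k^3\log n$ growth of the penalty absorbs it. It does, because $k\ge\log n$ makes the penalty gap superpolynomial in $\log n$. No uniformity of the constant $d_k$ over large $k$ is needed, since Proposition \ref{prop:bound_prob_overestimation_NML} involves only $d_{k_0}$. Combining the two ranges via Borel--Cantelli then gives $\hkNML(\A)\le k_0$ eventually almost surely.
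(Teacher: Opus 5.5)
Your proposal is correct and takes essentially the same route as the paper. It uses the same split into $(k_0,\log n]$ and $(\log n,n]$ and the same application of Proposition~\ref{prop:bound_prob_overestimation_NML} combined with monotonicity of the penalty: the $(1+\epsilon/2)\log n$ margin at $k=k_0+1$ handles the first range, the cubic-in-$k$ growth of $\pNML$ absorbs the factor $n$ in the second, and Borel--Cantelli finishes; your explicit $(\log n)^4$ estimate for $\pNML(\lceil\log n\rceil,n)$ just makes precise the paper's remark that the coefficient of $\log n$ in the exponent eventually exceeds $3$.
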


\begin{proof}
We first show that \[\hkNML(\A)\notin (k_0,\log n] \]
eventually almost surely as $n \to \infty$. By Proposition \ref{prop:bound_prob_overestimation_NML}, we obtain
\begin{equation}
 \begin{split}
\Pto(\hkNML &\in (k_0,\log n]) = \sum\limits_{k=k_0+1}^{\log n} \Pto(\hkNML(\A)=k)\\
& \leq \sum\limits_{k=k_0+1}^{\log n} \exp\left( \frac{k_0(k_0+2)-1}{2}\log n + d_{k_0} + \pen(k_0,n) -\pen(k,n) \right) \\
& \leq \log n \exp\left( \frac{k_0(k_0+2)-1}{2}\log n + d_{k_0} + \pen(k_0,n) -\pen(k_0+1,n) \right) .
 \end{split}
\end{equation}
Using the penalty function defined in \eqref{eq:pen_NML} we have, for $\epsilon>0$, that
\begin{equation}
\pen(k_0,n) -\pen(k_0+1,n) = - \left( \frac{k_0(k_0+2)-1}{2} + 1+ \epsilon/2\right)\log n.
\end{equation}
The result follows from the Borel–Cantelli lemma, since
\begin{equation}
    \sum\limits_{n=1}^{\infty}\Pto(\hkNML(\A) \in (k_0,\log n]) =  e ^{c_{k_0}}\sum\limits_{n=1}^{\infty}\frac{\log n}{n^{1+\epsilon/2}} < \infty.
\end{equation}
We now show that \[\hkNML(\A)\notin (k_0,\log n] \]
eventually almost surely as $n \to \infty$.  Following the same approach as before, we use Proposition \ref{prop:bound_prob_overestimation_NML} to obtain
\begin{equation}
 \begin{split}
\Pto(\hkNML(\A)&\in (\log n,n]) \leq \sum\limits_{k=\log n}^{n} \Pto(\hkNML(\A)=k)\\
& \leq n \exp\left( \frac{k_0(k_0+2)-1}{2}\log n + c_{k_0} + \pen(k_0,n) -\pen(\log n,n) \right) \\
& \leq e^{c_{k_0}} n \exp\left[ -\log n\left(-\frac{k_0(k_0+2)-1}{2} -\frac{\pen(k_0,n)}{\log n} +\frac{\pen(\log n,n)}{\log n}\right) \right] .
 \end{split}
\end{equation}
Since $\pen(k,n)/\log n$ grows cubically with respect to $k$, it follows, for $n$ sufficiently large, that
\begin{equation}
-\frac{k_0(k_0+2)-1}{2} -\frac{\pen(k_0,n)}{\log n} +\frac{\pen(\log n,n)}{\log n} > 3.
\end{equation}
Thus, the result follows from the Borel–Cantelli lemma, since
\begin{equation}
    \sum\limits_{n=1}^{\infty}\Pto(\hkNML(\A) \in (\log n,n])  < \infty.
\end{equation}
\end{proof}

\section{Non-underestimation}

In this section we show that the estimators $\hkNML$, $\hkNMCL$, and $\hk$ recover at least the true number of communities $k_0$, so that underestimation does not occur, eventually almost surely as $n \to \infty$. In this context, network sparsity plays an important role, as extremely sparse networks provide limited information to distinguish communities, which could otherwise lead to underestimation. In order to guarantee the consistency of the estimator, the penalty function needs to satisfy, for $k < k_0$,
\begin{equation}\label{eq:penalty_rho}
    \lim_{n\to \infty} \frac{\pen(k_0,n)-\pen(k,n)}{\rho_n n^2} = 0.
\end{equation}
Observe that both penalty functions defined in \eqref{eq:pen_NML} and \eqref{eq:pen_DNML} satisfy this condition provided that $\rho_n \to 0$ and $n\rho_n \to \infty$ as $n \to \infty$.

The next result guarantees that underestimation does not occur in the sparse regime, where $\rho_n\to 0$ as $n\rho_n \to \infty$. In fact, note that the non-underestimation of the estimators $\hk$ and $\hkNMCL$ is proved for a penalty function of order $\log n$. Thus, the additional term required in the penalty function in \eqref{eq:pen_DNML} arises from the proof of non-overestimation.

\begin{proposition}\label{prop:non_underestimation}
     Suppose that $\A$ is generated from a SBM with $k_0$ communities and parameters $\pi_0$ and $P_0=\rho_nS_0$ with either $\rho_n \to 0$ and $n\rho_n\to \infty$, or $\rho_n = 1$. If the penalty function $\pen(k,n)$ satisfies \eqref{eq:penalty_rho}, for $k\leq k_0$, then the estimators $\hkNML(\A)$, $\hkNMCL(\A)$ and $\hk(\A)$ do not underestimate $k_0$, eventually almost surely when $n\to \infty$.
\end{proposition}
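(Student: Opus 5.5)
Fix $k<k_0$ and show that the event $\{\hat k_n(\A)=k\}$ fails eventually almost surely for each estimator; since there are finitely many such $k$, this gives the proposition. By definition of the estimators, $\hat k_n(\A)=k$ forces
\[
\log \mathrm{Crit}_k(\A) - \log \mathrm{Crit}_{k_0}(\A) \geq \pen(k,n)-\pen(k_0,n),
\]
where $\mathrm{Crit}$ is $\NML$, $\NMCL(\cdot,\hz)$ or $\DNML(\cdot,\hz)$. I will show that the left side, divided by $\rho_n n^2$, has strictly negative $\limsup$ almost surely. By \eqref{eq:penalty_rho} the right side divided by $\rho_n n^2$ tends to $0$, which gives a contradiction for large $n$.

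The first step strips the normalizers. Proposition \ref{prop:razao_L_KT} gives $0\le \log C \le c\,k^2\log n + d_k$ for every normalizing constant, uniformly in $\z$. The ratio $\log n/(\rho_n n^2)\to 0$ because $n\rho_n\to\infty$, so these terms are negligible on the scale $\rho_n n^2$. In the numerator for $k$ I use only $\log C\ge 0$, and in the one for $k_0$ only the upper bound. For DNML, the product $\sup_P\P(\a\mid\z)\sup_\pi\Pp(\z)$ equals $\sup_{\pi,P}\Pt(\a,\z)$ because the parameters separate. Hence in all three cases it suffices to show that
\[
\limsup \frac{1}{\rho_n n^2}\Big(\log M_k(\A) - \log M_{k_0}(\A)\Big)<0,
\]
where $M_k$ is either the maximized marginal likelihood or the maximized complete likelihood at $\hz$.

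The second step reduces to Lemma \ref{lemma:cerqueira20}. For the complete-likelihood cases, $\log M_{k_0}(\A)=\sup_{\pi,P}\log\Pt(\A,\hz)\ge \sup_{\pi,P}\log \Pt(\A,\Z)$ by the definition of $\hz$ as the maximizer. For $k<k_0$ the model with $k$ communities is nested in the one with $k_0-1$ (take empty classes or $\pi_a\to 0$), so $M_k\le M^{*}_{k_0-1}$, the profile complete likelihood at $\hz^*$. Lemma \ref{lemma:cerqueira20} then yields the strictly positive $\liminf$ directly.

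For NML I sandwich the marginal likelihood. From above, $\sup_{\pi,P}\Pt(\A)\le k^n \sup_{\pi,P}\max_{\z}\Pt(\A,\z)$, and $n\log k/(\rho_n n^2)=\log k/(n\rho_n)\to 0$. From below, $\sup\Pt(\A)\ge \sup\Pt(\A,\Z)$ for the true $k_0$. So the same lemma applies, with all corrections of order $n+\log n=o(\rho_n n^2)$. The nesting step for $k<k_0-1$ needs a small remark that suprema are monotone in $k$, with closure of $\Pi^k$ handled by a limit.

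The main obstacle is making sure the lemma is usable as stated: it is a $\liminf$ almost surely along the true $(\A,\Z)$, which is exactly what I need, so the conclusion is directly almost sure and no Borel–Cantelli step is required. The other delicate point is checking that every error term ($\log n$ from the normalizers, $n\log k$ from the marginal sum) is $o(\rho_n n^2)$. This holds precisely in the regime $n\rho_n\to\infty$ or $\rho_n=1$.
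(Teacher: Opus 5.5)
Your proposal is correct and follows essentially the same route as the paper's proof. You strip the normalizing constants using the uniform bounds $0\le \log C \le O(\log n)$ of Proposition~\ref{prop:razao_L_KT}, lower-bound the $k_0$ criterion by the complete likelihood at the true $\Z$, and reduce $k<k_0-1$ to $k_0-1$ by nesting. You then absorb the $n\log k$ cost of the marginal-likelihood sandwich for NML and conclude with Lemma~\ref{lemma:cerqueira20}. Your contradiction framing for each fixed $k<k_0$ is equivalent to the paper's direct $\liminf$ argument, and you treat the NML nesting step for $k<k_0-1$ more explicitly than the paper does.
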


\begin{proof}
We begin by considering the estimator $\hk$. Let $\hz$ be the communities estimated with $k_0$ communities. For $k <k_0$, define $\hz^*$ the communities estimated in the smaller model, that is,
\begin{equation}
    \hz^{*}(\a) = \arg\max\limits_{\z \in [k]^n} \left\lbrace \sup\limits_{{\pi \in  \Pi^{k}, P\in\mathcal{P}^{k}}}\Pt(\a, \z)\right\rbrace.
\end{equation}
To prove the result it is enough to prove that for all $k < k_0$,
\begin{equation}
    \log\DNML_{k_0}(\A\, \hz(\A) ) - \pen(k,n) \geq \log\DNML_{k}(\A,\hz^*(\A) ) - \pen(k,n)
\end{equation}
eventually almost surely when $n\to \infty$.

For $\rho_n\to 0$ as $n\rho_n\to\infty$, using the assumption that
\begin{equation}
    \lim_{n\to \infty} \frac{\pen(k_0,n)-\pen(k,n)}{\rho_n n^2} =0
\end{equation}
it is enough to show that, eventually almost surely when $n\to \infty$,
\begin{equation}
    \liminf_{n\to \infty}\frac{1}{\rho_n n^2}\log \frac{\DNML_{k_0}(\A, \hz(\A) )}{\DNML_{k}(\A,\hz^*(\A) )} > 0.
\end{equation}

Using the definition of DNML we write
\begin{equation}
\begin{split}
\frac{1}{\rho_n n^2}\log \frac{\DNML_{k_0}(\A, \hz(\A) )}{\DNML_{k}(\A, \hz^*(\A) )} &=  \frac{1}{\rho_n n^2}\log \frac{\sup\limits_{{\pi \in  \Pi^{k_0}, P\in\mathcal{P}^{k_0}}}\Pt(\A, \hz(\A))}{\sup\limits_{{\pi \in  \Pi^{k}, P\in\mathcal{P}^{k}}}\Pt(\A, \hz^*(\A))} \\
& -\frac{1}{\rho_n n^2}\log\tilde C(\hz(\A),k_0) + \frac{1}{\rho_n n^2}\log\tilde C(\hz^*(\A),k),
\end{split}
\end{equation}
where
\begin{equation}
\log \tilde C(\e,k) = \log\CDMNL(\e,k) + \log\CDMNLz(n,k).
\end{equation}
By the uniform bound obtained in Proposition \ref{prop:razao_L_KT} we have that
\begin{equation}
    \begin{split}
         \frac{1}{\rho_n n^2}\log\tilde C(\hz^*(\A),k) -\frac{1}{\rho_n n^2}\log \tilde C(\hz(\A),k_0) &\geq  -\frac{1}{\rho_n n^2}\log\tilde C(\hz(\A),k_0) \\
         &\geq  -\frac{(k_0(k_0+2)-1)\log n-2c_{k_0}}{2\rho_n n^2}
    \end{split}
\end{equation}
goes to zero as $n\to \infty$ and $n\rho_n\to \infty$. Thus, it is enough to show that, eventually almost surely when $n\to \infty$,
\begin{equation}
      \liminf_{n\to \infty}\frac{1}{\rho_n n^2}\log \frac{\sup\limits_{{\pi \in  \Pi^{k_0}, P\in\mathcal{P}^{k_0}}}\Pt(\A, \hz(\A))}{\sup\limits_{{\pi \in  \Pi^{k}, P\in\mathcal{P}^{k}}}\Pt(\A, \hz^*(\A))} > 0.
\end{equation}

Without loss of generality, we begin by setting $k=k_0-1$.
Let the pair $(\Z,\A)$ be generated from the SBM with $k_0$ communities and parameters $\pi_0$ and $P_0$. We write
\begin{equation}
    \sup\limits_{{\pi \in  \Pi^{k_0}, P\in\mathcal{P}^{k_0}}}\Pt(\A, \hz(\A))  \geq \sup\limits_{{\pi \in  \Pi^{k_0}, P\in\mathcal{P}^{k_0}}}\Pt(\A, \Z)
\end{equation}
Thus, we have that 
\begin{equation}
\begin{split}
\liminf_{n\to \infty}\frac{1}{\rho_n n^2}&\log  \frac{\sup\limits_{{\pi \in  \Pi^{k_0}, P\in\mathcal{P}^{k_0}}}\Pt(\A, \hz(\A))}{\sup\limits_{{\pi \in  \Pi^{k_0-1}, P\in\mathcal{P}^{k_0-1}}}\Pt(\A, \hz^*(\A)} \\
&\geq \liminf_{n\to \infty}\frac{1}{\rho_n n^2}  \frac{\sup\limits_{{\pi \in  \Pi^{k_0}, P\in\mathcal{P}^{k_0}}}\Pt(\A, \Z)}{\sup\limits_{{\pi \in  \Pi^{k_0-1}, P\in\mathcal{P}^{k-1}}}\Pt(\A, \hz^*(\A))}\\
     &> 0
\end{split}
\end{equation}
where the last inequality follows from Lemma \ref{lemma:cerqueira20}.

To complete the proof for $k' < k_0-1$, let $\hz^{**}(\A)$ be the estimated communities for  a smaller model with $k'$ communities, we write
\begin{equation}
     \begin{split}
         &\log \frac{\sup\limits_{{\pi \in  \Pi^{k_0}, P\in\mathcal{P}^{k_0}}}\Pt(\A, \hz(\A))}{\sup\limits_{{\pi \in  \Pi^{k'}, P\in\mathcal{P}^{k'}}}\Pt(\A, \hz^{**}(\A))}\\
         &= \log \frac{\sup\limits_{{\pi \in  \Pi^{k_0}, P\in\mathcal{P}^{k_0}}}\Pt(\A, \hz(\A))}{\sup\limits_{{\pi \in  \Pi^{k_0-1}, P\in\mathcal{P}^{k_0-1}}}\Pt(\A, \hz^*(\A))} + \log \frac{\sup\limits_{{\pi \in  \Pi^{k_0-1}, P\in\mathcal{P}^{k_0-1}}}\Pt(\A, \hz^*(\A))}{\sup\limits_{{\pi \in  \Pi^{k'}, P\in\mathcal{P}^{k'}}}\Pt(\A, \hz^{**}(\A))}\\
         & \geq \log \frac{\sup\limits_{{\pi \in  \Pi^{k_0}, P\in\mathcal{P}^{k_0}}}\Pt(\A, \hz(\A))}{\sup\limits_{{\pi \in  \Pi^{k_0-1}, P\in\mathcal{P}^{k_0-1}}}\Pt(\A, \hz^*(\A))}
     \end{split}
\end{equation}
 where the first inequality follows from the fact that $\Pi^{k'}\subset \Pi^{k_0-1}$ and $\mathcal{P}^{k'} \subset \mathcal{P}^{k_0-1}$, $k'< k_0$, implying that the maximum likelihood function is a
non-decreasing function of the dimension of the model, that is,
\begin{equation}
\begin{split}
\sup\limits_{{\pi \in  \Pi^{k_0-1}, P\in\mathcal{P}^{k_0-1}}}\Pt(\A, \hz(\A)) &= \max_{\z\in [k_0-1]^n}\sup\limits_{{\pi \in  \Pi^{k_0-1}, P\in\mathcal{P}^{k_0-1}}}\Pt(\A, \z)\\
& \geq \max_{\z\in [k_0-1]^n}\sup\limits_{{\pi \in  \Pi^{k'}, P\in\mathcal{P}^{k'}}}\Pt(\A, \z)\\
&\geq \max_{\z\in [k']^n}\sup\limits_{{\pi \in  \Pi^{k'}, P\in\mathcal{P}^{k'}}}\Pt(\A, \z).
\end{split}
\end{equation}
Thus, we conclude that
\begin{equation}
\liminf_{n\to \infty}\frac{1}{\rho_n n^2}\log  \frac{\sup\limits_{{\pi \in  \Pi^{k_0}, P\in\mathcal{P}^{k_0}}}\Pt(\A, \hz(\A))}{\sup\limits_{{\pi \in  \Pi^{k'}, P\in\mathcal{P}^{k'}}}\Pt(\A, \hz^*(\A)} > 0
\end{equation}
For the estimator $\hkNML$ and $k'\leq k_0$, it is enough to show that, eventually almost surely when $n\to \infty$,
\begin{equation}
    \liminf_{n\to \infty}\frac{1}{\rho_n n^2}\log \frac{\NML_{k_0}(\A )}{\NML_{k'}(\A)} > 0.
\end{equation}
Thus, we write
\begin{equation}
\begin{split}
\frac{1}{\rho_n n^2}\log \frac{\NML_{k_0}(\A)}{\NML_{k'}(\A )} &=  \frac{1}{\rho_n n^2}\log \frac{\sup\limits_{{\pi \in  \Pi^{k_0}, P\in\mathcal{P}^{k_0}}}\Pt(\A)}{\sup\limits_{{\pi \in  \Pi^{k'}, P\in\mathcal{P}^{k'}}}\Pt(\A)} \\
& -\frac{1}{\rho_n n^2}\log\CNML(n,k_0) + \frac{1}{\rho_n n^2}\log\CNML(n,k').
\end{split}
\end{equation}
By Proposition \ref{prop:razao_L_KT} we have that
\begin{equation}
    \begin{split}
         \frac{1}{\rho_n n^2}\log \CNML(n,k') -\frac{1}{\rho_n n^2}\log \CNML(n,k_0) \geq  -\frac{1}{n^2}\CNML(n,k_0) \geq  -\frac{(k_0(k_0+2)-1)\log n-2d_{k_0}}{2\rho_n n^2}
    \end{split}
\end{equation}
goes to zero as $n\to \infty$. We consider again the case that $k'=k_0-1$. Let the pair $(\Z,\A)$ be generated from the SBM with $k_0$ communities and parameters $\pi_0$ and $P_0$. Thus, we write
\begin{equation}
\sup\limits_{{\pi \in  \Pi^{k_0}, P\in\mathcal{P}^{k_0}}}\Pt(\A) = \sup\limits_{{\pi \in  \Pi^{k_0}, P\in\mathcal{P}^{k_0}}}\sum_{\e \in [k_0]^n}\Pt(\A,\e) \geq \sup\limits_{{\pi \in  \Pi^{k_0}, P\in\mathcal{P}^{k_0}}}\Pt(\A,\Z)
\end{equation}
and
\begin{equation}
\begin{split}
\log\sup\limits_{{\pi \in  \Pi^{k_0-1}, P\in\mathcal{P}^{k_0-1}}}\Pt(\A) &=\log \sup\limits_{{\pi \in  \Pi^{k_0-1}, P\in\mathcal{P}^{k_0-1}}}\sum_{\e \in [k_0-1]^n}\Pt(\A,\e)\\
&\leq \sup\limits_{{\pi \in  \Pi^{k_0-1}, P\in\mathcal{P}^{k_0-1}}}\Pt(\A,\z^*(\A) + n\log (k_0-1).
\end{split}
\end{equation}
Thus, applying again Lemma \ref{lemma:cerqueira20} we have that
\begin{equation}
\begin{split}
 \liminf_{n\to \infty} &\frac{1}{\rho_n n^2}\log \frac{\NML_{k_0}(\A)}{\NML_{k_0-1}(\A )} \\
 &\geq  \liminf_{n\to \infty}\frac{1}{\rho_n n^2}\log \frac{\sup\limits_{{\pi \in  \Pi^{k_0}, P\in\mathcal{P}^{k_0}}}\Pt(\A,\z)}{\sup\limits_{{\pi \in  \Pi^{k_0-1}, P\in\mathcal{P}^{k_0-1}}}\Pt(\A,\z^*(\A))} - \liminf_{n\to \infty}\frac{\log (k_0-1)}{\rho_n n}  \\
& > 0.
\end{split}
\end{equation}
Finally,  the result for $\hkNMCL$ follows as the result for $\hk$ by observing that
\begin{equation}
\begin{split}
\frac{1}{\rho_n n^2}\log &\frac{\NMCL_{k_0}(\A, \hz(\A) )}{\NMCL_{k}(\A, \hz^*(\A) )} =  \frac{1}{\rho_n n^2}\log \frac{\sup\limits_{{\pi \in  \Pi^{k_0}, P\in\mathcal{P}^{k_0}}}\Pt(\A, \hz(\A))}{\sup\limits_{{\pi \in  \Pi^{k}, P\in\mathcal{P}^{k}}}\Pt(\A, \hz^*(\A))} \\
& -\frac{1}{\rho_n n^2}\log \CNMCL(\hz(\A),k_0) + \frac{1}{\rho_n n^2}\log\CNMCL(\hz^*(\A),k),
\end{split}
\end{equation}
and by the uniform bound obtained in Proposition \ref{prop:razao_L_KT} we have that
\begin{equation}
    \begin{split}
         \frac{1}{\rho_n n^2}\log\CNMCL(\hz^*(\A),k') -\frac{1}{\rho_n n^2}\log \CNMCL(\hz(\A),k_0) &\geq  -\frac{1}{\rho_n n^2}\CNMCL(\hz(\A),k_0) \\
         &\geq  -\frac{(k_0(k_0+2)-1)\log n-2c_{k_0}}{2\rho_n n^2}
    \end{split}
\end{equation}
goes to zero as $n\to \infty$.
\end{proof}

\end{document}